\documentclass{article}

\usepackage{amsmath}
\usepackage{amssymb}
\usepackage{amsfonts}
\usepackage{amscd}

\usepackage{amsthm}

\usepackage{enumerate}
\usepackage{graphicx}
\usepackage{subfig}
\usepackage{hyperref}
\usepackage{url}

\def \a {\alpha}
\def \A {\mathcal{A}}
\def \B {\mathcal{B}}
\def \N {\mathbb{N}}

\def \E {\mathcal{E}}
\def \Z {\mathbb{Z}}
\def \R {\mathbb{R}}
\def \M {\mathcal{M}}
\def \maj{\mbox{{\tt maj}}}
\def \traf{\mbox{{\tt traf}}}
\def \gkl{\mbox{{\tt gkl}}}

\newcommand{\Neighb}{\mathcal{N}}
\newcommand{\zero}{{\bf 0}}
\newcommand{\one}{{\bf 1}}
\newcommand{\Dzero}{{\delta_{\zero}}}
\newcommand{\Done}{{\delta_{\one}}}
\newcommand{\T}{\mathcal{T}}

\newtheorem{proposition}{Proposition}
\newtheorem{theorem}{Theorem}
\newtheorem{conjecture}{Conjecture}


\newcommand{\putfig}[2]{
\includegraphics[scale=#1]{fig/#2}
}

\newcommand{\FigIPS}{
\begin{figure}
\begin{center}
\putfig{0.7}{IPS_bif-0.pdf} \quad
\putfig{0.7}{IPS_bif-1.pdf} \quad
\putfig{0.7}{IPS_bif-2.pdf}
\end{center}
\caption{Illustration of the definition of the IPS.}
\label{fig:IPS}
\end{figure}
}

\newcommand{\FigTheorem}{
\begin{figure}
 \begin{center}
 \subfloat[]{\putfig{0.7}{demo_bif.pdf}} \hfill
 \subfloat[]{\putfig{0.6}{Exemple_bif.pdf}}
 \end{center}
 \caption{Illustration of the proof of Theorem \ref{thm:ips}}\label{fig:ips}
\label{fig:theorem} 
\end{figure}
}

\newcommand{\FigCAtheo}{
\begin{figure}
\begin{center}
\subfloat[]{\putfig{0.45}{tree4.pdf}}
\subfloat[]{\putfig{0.45}{tree3.pdf}}
\end{center}
\caption{The cellular automata described by Theorem \ref{thm:T4} and Theorem \ref{thm:T3}}
\label{fig:CAtheo}
\end{figure}
}

\newcommand{\FigDiagTraMaj}{
\begin{figure}
\begin{center}
\putfig{0.25}{TrafMaj-DensityStudy-k70-good-t00099}
\quad
\putfig{0.25}{TrafMaj-DensityStudy-k70-bad-t00099}

\end{center}
\caption{Two space-time diagrams of the majority-traffic PCA for $\alpha=0.1$ and $ n =149 $. The {\em same} initial condition with density 70/149 is used. The case seen on the right is a rare event (evolution towards a bad classification).}
\label{fig:TraMaj}
\end{figure}
}

\newcommand{\FigDiagGKLKari}{
\begin{figure}
\begin{center}
\begin{tabular}{c c}
\putfig{0.25}{GKL-k70-good-t00099}
&
\putfig{0.25}{GKL-k79-good-t00099}\\
GKL, $ d < 1/2$ & 
GKL, $ d > 1/2$ \\
\end{tabular}

\bigskip
\begin{tabular}{c c}
\putfig{0.25}{Kari-k70-good-t00099}
&
\putfig{0.25}{Kari-k79-good-t00099}\\
Kari, $ d < 1/2$ & 
Kari, $ d > 1/2$ \\
\end{tabular}

\end{center}
\caption{Two space-time diagrams of GKL (top) and Kari's PCA (bottom) for $n=149$.
Initial condition with density 70/149 (left) and 77/149 (right).}
\label{fig:Kari}
\end{figure}
}

\newcommand{\FigDistrib}{
\begin{figure}
\begin{center}
\putfig{0.45}{fixedNfinitesize}
\end{center}
\caption{Majority-traffic rule with $ \alpha=0.25$: Evolution of the error rate as a function of the initial density when doubling the ring size.}
\label{fig:distrib}
\end{figure}
}

\newcommand{\FigScalingLaws}{
\begin{figure}
\begin{center}
\begin{tabular}{c}
\putfig{0.33}{scaling-GKL.pdf} \\
\putfig{0.33}{scaling-Kari.pdf}\\
\putfig{0.33}{scaling-TraMaj.pdf} \\
\end{tabular}
\end{center}
\caption{Experimental determination of the quality of classification $ Q(n) $ as a function of ring size $ n $. Cells are initialised with a probability $p$ to be in state~$1$. Each point represents an average computed out $100\,000$ trajectories.}
\label{fig:scalingQuality}
\end{figure}
}

\begin{document}

\title{Density classification on infinite \\ lattices and trees}%

\author{Ana {\sc Bu\v{s}i\'c}%
\thanks{INRIA/ENS, 23, avenue d'Italie, CS 81321, 75214 Paris Cedex 13, France. 
E-mail: {\tt Ana.Busic@inria.fr}.}
\and
Nazim {\sc Fat\`es}
\thanks{INRIA Nancy -- Grand-Est, LORIA, Nancy Universit\'e, BP 239, 54506, Vand\oe uvre-l\`es-Nancy, France.
E-mail: {\tt Nazim.Fates@loria.fr.}}
\and
Jean {\sc Mairesse}%
\thanks{LIAFA, CNRS and Universit\'e Paris Diderot - Paris 7, Case 7014, 75205 Paris Cedex 13, France.
E-mail: {\tt Jean.Mairesse@liafa.jussieu.fr}.}
\and 
Ir\`ene {\sc Marcovici}%
\thanks{LIAFA, CNRS and Universit\'e Paris Diderot - Paris 7, Case 7014, 75205 Paris Cedex 13, France. E-mail: {\tt Irene.Marcovici@liafa.jussieu.fr} (corresponding author).}}

%
%
%
%

\maketitle              

\begin{abstract}
Consider an infinite graph with nodes initially labeled by independent Bernoulli
random variables of parameter $p$. We address the density classification problem, that is, we want to design a (probabilistic or deterministic)
cellular automaton or a finite-range interacting particle system that evolves on this graph and decides whether 
$p$ is smaller or larger than $1/2$. Precisely, the trajectories should 
converge
to the uniform configuration with only $0's$ if $p<1/2$, and only $1's$ if $p>1/2$.  We present solutions
to that problem on $\Z^d$, for any $d\geq 2$, and on the regular
infinite trees. For $\Z$, we propose some candidates that we
back up with numerical simulations. 

{\bf Keywords.} Cellular automata, interacting particle systems, density
classification, percolation.
\end{abstract}

\section{Introduction}

Consider a finite or a countably infinite set of {\em cells}, which are spatially arranged according to a group structure $ G $.
We are interested in the {\em density classification} problem, which consists of deciding in a decentralised way, if an initial configuration on $ G $ contains more 0's or more 1's. 
More precisely, the goal is to design a deterministic or
probabilistic dynamical system that evolves in the {\em configuration space} $  \{0,1\}^G$ with a local and homogeneous updating rule
and whose trajectories
converge to $0^G$ or to $1^G$ if the initial
configuration contains more 0's or more 1's, respectively. 
To attack the problem, two natural instantiations of dynamical
systems are considered, one with
synchronous updates of the cells, and one with asynchronous
updates.  In the first case, time is
discrete, all cells are updated at each time step, and the model is known as a  {\em
  Probabilistic Cellular Automaton (PCA)}~\cite{toom}. A {\em Cellular Automaton
  (CA)} is a PCA in which the updating rule is deterministic. In the
second case, 
time is continuous, cells are updated at random instants, at most
one cell is updated at any given time, and the model is known as a (finite range) {\em
  Interacting Particle System (IPS)}~\cite{liggett}. 

\smallskip

The general spirit of the problem is that of distributed computing: gathering a
global information by exchanging only local information.
The challenge is
two-fold: first, it is impossible to centralise the information
(cells are indistinguishable); 
second, it is impossible to use classical counting techniques (cells contain
only a binary information). 

\smallskip

The density classification problem was originally introduced for rings of
finite size ($G=\Z/n\Z$) and for synchronous models~\cite{packard}. 
After experimentally observing that finding good rules to perform this task was difficult, it was shown that perfect classification with CA is impossible, that is, there exists no given
CA that solves the density classification problem for all values of
$n$~\cite{land}. This result however did not stop the quest for the
best -- although imperfect -- models as nothing was known about how
well CA could perform. The use of PCA opened a new path
\cite{fuks,schuele} and it was shown that there exist PCA that can
solve the problem with an arbitrary precision~\cite{fates}.  In the
present paper, Prop. \ref{pr-pca}, we complement the results from
\cite{land,fates} by showing that there exists no PCA that solves the density classification problem for all values of
$n$. 

\smallskip

The challenge is now to extend the research to infinite groups (whose
Cayley graphs are lattices or regular trees). First, we need to specify the meaning of
``having more 0's or more 1's'' in this context. 
Consider a random configuration on $\{0,1\}^G$ obtained by assigning
independently to each cell a value 1 with probability $p$ and a value
0 with probability $1-p$. 
We say that a model ``classifies the density''
if the trajectories converge
weakly to $1^G$ for $p>1/2$, and to $0^G$ for $p<1/2$. 
A couple of conjectures and negative results exist in the literature. 
Density classification on $\Z^d$ is considered in \cite{cox}
under the name of ``bifurcation''. 
The authors study variants of the famous voter model
IPS~\cite[Ch. V]{liggett} and they propose two instances that are
conjectured to bifurcate. 
The density classification question has also been
addressed for the Glauber dynamic associated to the Ising
model at temperature 0, both for lattices and for
trees~\cite{fontes,howard,kanoria}.  The Glauber dynamic defines an IPS or PCA having $0^G$ and $1^G$ as
invariant measures. 
Depending on the cases, there is either a proof that the Glauber
dynamic does not classify the density, or a conjecture that it
does with a proof only for densities sufficiently close to $0$ or
$1$. 

\smallskip

The density classification problem has been approached with
different perspectives on finite and infinite groups, as emphasised by
the results collected above. For finite
groups, the problem is studied {\em per se}, as
a benchmark for understanding the power and limitations of PCA as a
computational model. The community involved is rather on the computer
science side. For infinite groups, the goal is to understand the
dynamics of specific models that are relevant in statistical
mechanics. The community involved is rather on the theoretical physics
and probability theory side. 

\smallskip

The aim of the present paper is to investigate how to generalise the
finite group approach to the infinite group case. 
We want to build models of PCA and IPS, as simple
as possible, that correct random noise in the initial configuration, even if the density of errors
is close to $1/2$. 
We consider the groups $\Z^d$, whose Cayley graphs are lattices (Section \ref{se:Z^2}), and
the free groups, whose Cayley graphs are infinite regular trees (Section \ref{se:trees}). 
In all cases, except for $\Z$, we obtain both PCA and IPS models
that classify the density. To the best of our
knowledge, they constitute the first known such examples. 
The case of $\Z$ is more complicated and could be linked to the
so-called {\em positive rates conjecture}~\cite{gacs_pos}. We provide some
potential candidates for
density classification together with simulation experiments (Section
\ref{se:Z}).

\section{Defining the density classification problem}

Let $(G,\cdot)$ be a finite or countable set of \emph{cells} equipped
with a group structure. Set $\A=\{0,1\}$, the {\em alphabet}, and
$X=\A^G$, the set of {\em configurations}. For $x\in X$ and $u\in
\{0,1\}$, denote by $|x|_u$ the number of occurences of $u$ in $x$. 

\subsection{PCA and IPS}

%

Given a finite set $\Neighb \subset G$, a \emph{transition function} 
of \emph{neighbourhood} $\Neighb$
is a function $f: \A^{\Neighb} \rightarrow  \A$. 
The \emph{cellular automaton (CA)} $F$ of transition
function $f$ is the application $F: X \rightarrow  X $
defined by:
$$\forall x\in X, \forall g\in G, \quad F(x)_g=f((x_{g\cdot v})_{v\in \Neighb}).$$
When the group $G$ is $\Z^d$ or $\Z_n=\Z/n\Z$, we denote as usual the law of $G$ by the sign $+$, so that the definition can be written:
$\forall x\in X, \forall k\in \Z^d \mbox{ (resp. } \Z_n\mbox{)},
F(x)_k=f((x_{k+v})_{v\in \Neighb}).$

\emph{Probabilistic cellular automata (PCA)} are an extension of
classical CA: the transition function is now a function $ \varphi :
\A^{\Neighb} \rightarrow  \M(\A)$, where $\M(\A)$ denotes the set of
probability measures on $\A$. At each time step, the cells are updated
synchronously and independently, according to a distribution depending
on a finite neighbourhood \cite{toom}. This defines an application
$F:\M(X)\rightarrow \M(X)$. The image of a measure $\mu$ is denoted by
$\mu F$. 

\smallskip

The analog of PCA in continuous time are 
\emph{(finite-range) interacting particle systems (IPS)}~\cite{liggett}.  
IPS are characterised by a finite neighbourhood
$\Neighb \subset G$, and a transition function $f: \A^{\Neighb}
\rightarrow  \A$ (or $\varphi: \A^{\Neighb} \rightarrow  \M(\A)$). We
attach random and independent clocks to the cells of
$G$. For a given cell, the instants of $\R_+$ at which the clock rings form a
Poisson process of parameter 1. 
Let $x^t$ be the configuration at time
$t\geq 0$ of the process. If
the clock at cell $g$ rings at instant~$t$, the state of the cell $g$ is
updated into $f((x^t_{g\cdot v})_{v\in\Neighb})$ (or according to the
probability measure $\varphi((x^t_{g\cdot v})_{v\in\Neighb})$). This defines a
transition semigroup $F=(F^t)_{t\in\R_+}$, with $F^t:\M(X)\rightarrow
\M(X)$. If the initial measure is $\mu$, the distribution of the process at
time $t$ is given by $\mu F^t$. 

\smallskip

In a PCA, all cells are updated at each time step, in a
``synchronous'' way. On the other hand, for an IPS, the updating is
``asynchronous''. Indeed, the probability of having
two clocks ringing at the same instant is 0. 

\smallskip

Observe that PCA are discrete-time Markov
chains, while IPS are continuous-time Markov
processes. A measure $\mu$ is said to be an \emph{invariant mesasure}
of a process $F$, resp. $(F_t)_t$, if $\mu F=\mu$, resp.  $\mu F_t=\mu$ for all $t\in \R_+$.

\subsection{The density classification problem on $\Z_n$}

The density classification problem was originally stated as follows:
find a finite neighbourhood $\Neighb\subset\Z$ and a transition
function $f:\A^{\Neighb}\rightarrow \A$ such that for any integer
$n\geq 1$ and any configuration $x\in\A^{\Z_n}$, when applying the CA
$F$ of transition function $f$ to $x$, the sequence of iterates
$(F^k(x))_{k\geq 0}$ reaches the fixed point $\zero=0^n$ if
$|x|_0> |x|_1 $ and the fixed point $\one=1^n$ if $|x|_1> |x|_0$.
Land and Belew~\cite{land} have proved that there exists no CA that perfectly performs
 this density classification task for all values of $n$. 
We now prove that this negative result can be extended to the PCA. It provides at the same time a new proof for CA as a particular case. 

\smallskip

Denote by $\delta_x$ the probability measure that corresponds to a Dirac
distribution centred on $x$. 

\begin{proposition}\label{pr-pca} There exists no PCA or IPS that solves perfectly the
  density classification problem on $\Z_n$,  that is, for any
  integer $n\geq 1$, and for any configuration $x\in\A^{\Z_n}$, 
   $(\delta_x F^t)_{t\geq 0}$ converges to $\delta_{\zero}$  if
   $|x|_0>n/2$ and to  $\delta_{\one}$ if $|x|_1>n/2$.  
\end{proposition}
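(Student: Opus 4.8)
The plan is to first extract a necessary structural condition from perfect classification, and then to defeat it with a carefully chosen \emph{fooling configuration}, exploiting the finite range of the rule.

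\smallskip

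\emph{Step 1 (the uniform configurations must be absorbing).} Fix $n$ and view the PCA $F$ as a Markov chain on the finite state space $\A^{\Z_n}$. If $\delta_x F^t$ converges weakly to $\delta_{\zero}$, then on this finite space the mass at $\zero$ tends to $1$, i.e.\ $\Pr_t(\zero)\to 1$. A one-line Markov argument then forces $\zero$ to be absorbing: passing to the limit in $\Pr_{t+1}(\zero)=\sum_y \Pr_t(y)P(y,\zero)$ gives $1=P(\zero,\zero)=\varphi(0^{\Neighb})(0)^n$, hence $\varphi(0^{\Neighb})=\delta_0$, and symmetrically $\varphi(1^{\Neighb})=\delta_1$. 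Thus $\delta_{\zero}$ and $\delta_{\one}$ are invariant, and a cell whose entire neighbourhood reads $0$ (resp.\ $1$) stays $0$ (resp.\ $1$) almost surely, so the only genuine randomness occurs near the \emph{interfaces} between maximal blocks of $0$'s and of $1$'s. For the IPS the same conclusion follows from the generator, $\zero$ and $\one$ being traps. This step is what the proof for a plain CA gets for free, and is the main new ingredient needed to pass to the probabilistic and continuous-time settings.

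\smallskip

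\emph{Step 2 (a fooling configuration).} Let $r$ be the radius of $\Neighb$. Because information travels at speed at most $r$ per step, the rule cannot tell whether a given block is globally in the minority or in the majority until the relevant far-away cells have had time to influence it. I would therefore build, for $n$ large, a configuration $x$ with a strict majority of $1$'s that nonetheless contains a region which, locally and for a long time, is indistinguishable from the corresponding region of a majority-$0$ configuration $y$. Coupling the two processes with the same randomness, the states of the cells deep inside the common window coincide in law for all $t$ up to a time $T_n$ of order (window size)$/r$, which I arrange to tend to infinity with $n$. Since classification demands that these central cells tend to $1$ under $x$ and to $0$ under $y$, comparing the two at a suitable time yields the contradiction. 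A single-cell perturbation does not suffice here: the pair $0^m1^{m+1}$ and $0^{m+1}1^m$ is exchanged by the reflection-and-flip symmetry and is thus handled \emph{consistently}; so the fooling configuration must use several blocks, with a small minority block placed so that the locally forced dynamics (identical to that of a genuine minority situation) drives the global density the wrong way.

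\smallskip

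The step I expect to be the main obstacle is the \emph{timing}: for a fixed $n$ the rate of convergence to $\delta_{\one}$ or $\delta_{\zero}$ is not controlled a priori, so I must guarantee that the erroneous local behaviour becomes globally irreversible \emph{before} the tie-breaking information can propagate across the window and correct it. I would address this by exploiting Step~1: uniform blocks are frozen, so by making the blocks long the fate of the central region is effectively sealed within $O(1)$ interface steps, well inside the propagation time $T_n$. Alternatively, one can take a limit of the finite rings to obtain a process on $\Z$ and run a diagonal argument, the finite speed of propagation (for the IPS, via its graphical representation) playing the same role. Once the timing is under control the contradiction with perfect classification is immediate, and it specialises to the original CA statement by taking $\varphi$ deterministic.
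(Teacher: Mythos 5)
Your Step 1 is correct (though it is a fact the paper's proof never needs), but Step 2 --- the heart of your argument --- has a gap that you yourself identify and then do not close: the timing problem. Perfect classification is a purely asymptotic requirement; it gives no bound whatsoever on how long the ring of size $n$ may take to reach $\zero$ or $\one$. Consequently the coupling/finite-speed-of-propagation argument yields no contradiction: the two coupled processes agreeing on the central window up to a time $T_n$ of order (window size)$/r$ is perfectly compatible with both of them classifying correctly afterwards, since nothing forces the central cells to be near their final value by time $T_n$. Your first fix --- ``uniform blocks are frozen, so the fate of the central region is sealed within $O(1)$ interface steps'' --- is false: Step 1 only says that a cell whose whole neighbourhood is monochromatic keeps its state for \emph{one} step; a long uniform block erodes from its boundaries at speed up to $r$ per step, and indeed the good classifiers on rings (GKL, Kari's CA) work precisely because finite blocks of the minority state get eaten (the eroder property), so nothing is ``sealed''. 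Your second fix (pass to a limit process on $\Z$ and diagonalize) transfers nothing, again because there is no uniform rate of convergence to carry into the limit, and ``majority'' is not even defined for the limiting configurations. This obstacle is fundamental: no propagation-speed argument alone can prove the proposition.

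The paper's proof rests on a counting mechanism that your proposal is missing. One shows that a perfect classifier must conserve the number of ones almost surely in a single step. Indeed, suppose some transition from $x$ to $y$ with $|y|_1 > |x|_1$ has positive probability on $\A^{\Z_n}$. Embed $x$ into a larger ring: let $z = x^k 0^m \in \A^{\Z_{kn+m}}$ (or $x^k 1^m$ when $|x|_1 < n/2$). Because the rule is local, with positive probability one step transforms $z$ into a configuration $w$ containing $k$ slightly truncated copies of $y$, so $|w|_1 \geq k(|x|_1+1) - 2\ell$, where $\ell$ bounds the radius; choosing $k$ and $m$ suitably makes $|z|_1 < (kn+m)/2$ while $|w|_1 > (kn+m)/2$. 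By the Markov property and the assumed perfection applied to $w$ as a fresh initial condition, the process then converges to $\one$ almost surely, i.e.\ a misclassification of $z$ occurs with positive probability --- contradiction. Hence the number of ones is a.s.\ conserved at every step, which is incompatible with ever reaching $\zero$ or $\one$ from a non-uniform configuration. Note how this exploits the ``for all $n$'' hypothesis through a cross-ring embedding that flips which symbol holds the majority, rather than through any estimate on convergence times; some device of this kind, not a fooling-window argument, is what the proof requires.
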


\begin{proof} We carry out the proof for PCA. For IPS, the argument is
  similar and even simpler. 
Let us assume that $F$ is a PCA that solves perfectly
  the density classification problem on $\Z_n$.
Let $\Neighb$ be the
  neighbourhood of $F$, and let $\ell$ be such that $\Neighb\subset
  (-\ell,\ell)$. We will prove that for any $x\in\A^{\Z_n}$ (with
  $n\geq 2\ell$), the number of occurrences of $1$ after application of
  $F$ to $x$ is almost surely equal to $|x|_1$. Let us assume that it
  is not the case. Then, we have:
\begin{equation}\label{eq-diff}
\exists x,y \in\A^{\Z_n}, \ |x|_1 \neq |y|_1, \qquad \delta_x F (y) >0 \:.
\end{equation}
Assume for instance that $|y|_1>|x|_1$ (the case $|y|_1<|x|_1$ is
treated similarly). 
We first assume that $|x|_1=a > n/2$. For some integers $k\geq2,m\geq
2\ell$, let us consider the configuration
$z=x^k0^{m}\in\A^{\Z_{kn+m}}$. We have $|z|_1=ka$. Let $y_s$ be the
suffix of length $n-\ell$ of $y$, and let $y_p$ be the prefix of
length $n-\ell$ of $y$. 
By applying 
(\ref{eq-diff}), it follows that:
\[
\exists u,v,u',v' \in \A^{\ell}, \qquad \delta_z
F (uy_sy^{k-2}y_pvu'0^{m-2\ell}v') >0 \:.
\]
Set $w=uy_sy^{k-2}y_pvu'0^{m-2\ell}v'$. We have $|w|_1 \geq k|y|_1 -2\ell \geq k(a+1)
- 2\ell$. 
For $m$ big enough, if we set $k$ to be the largest integer such that $k(a-n/2)<m/2$, we have:
\[
|z|_1=ka < {kn+m\over 2}, \qquad |w|_1 \geq k(a+1) -2\ell>
{kn+m\over 2} \:.
\]
So, with a positive probability, we can reach a configuration
with more ones than zeros starting from a configuration with more
zeros than ones. Since $F$ classifies the
density with probability $1$, the new configuration can be considered as an initial condition that needs to be classified and will thus almost surely evolve to the fixed point $\one$, that is, a bad classification will occur, which contradicts our hypothesis. 

The case $|x|_1=a < n/2$ is analogous, except that we now consider
configurations $z$ of the form  $x^k1^{m}\in\A^{\Z_{kn+m}}$ and choose
the integers $k, m$ such that $ka+m< (kn+m)/2$ and
$k(a+1)+m-2\ell > (kn+m)/2$.  

We have proved that for any $x\in\A^{\Z_n}$ (with $n\geq \ell$), the
number of occurrences of ones after application of $F$ to $x$ is almost
surely equal to $|x|_1$. This is in contradiction with the fact that
$F$ classifies the density. 
\end{proof}

This proposition can be extended to larger dimensions: for any $d\geq
1$, there is no PCA or IPS that classifies perfectly the density on all the
groups of the form $\Z_{n_1}\times\ldots\times\Z_{n_d}$. 

\subsection{The density classification problem on infinite groups}\label{se:pb}

Let us define formally the density classification problem on infinite
groups.  

We denote by $\mu_p$ the Bernoulli measure of parameter $p$, that is,
the product measure of density $p$ on $X=\A^G$. A realisation of
$\mu_p$ is obtained by assigning independently
to each element of $G$ a label $1$ with probability $p$ and a label
$0$ with probability $1-p$. We denote respectively by $\zero$ and
$\one$ the two uniform configurations $0^G$ and $1^G$ and by $
\delta_x $ the probability measure that corresponds to a Dirac
distribution centred on $x$. 

\smallskip

The {\em density classification problem} is
to find a PCA or an IPS $F$, such that:  
\begin{equation}\label{def_pb}
\begin{cases}
p<1/2  \implies \mu_pF^t \xrightarrow[t\rightarrow \infty]{w} \Dzero \\
p>1/2  \implies \mu_pF^t \xrightarrow[t\rightarrow \infty]{w} \Done \\
\end{cases}.
\end{equation}

The notation $\xrightarrow{w}$ stands for the weak convergence of measures. In our
case, the interpretation is that for any {\em finite} subset $K\subset
G$, the probability that at time $t$, all the cells of $K$ are labelled
by $0$ (resp. by $1$) tends to $1$ if $p<1/2$ (resp. if $p>1/2$). Or
equivalently, that for any single cell, the probability that it is
labelled by $0$ (resp. by $1$) tends to $1$ if $p<1/2$ (resp. if
$p>1/2$). 

\subsection{From subgroups to groups}

Next proposition has 
the following consequence: given a process that classifies the density
on $\Z^2$, we can design a new one that classifies on $\Z^d$ for $ d >
2$. The idea is to divide $\Z^d$ into $\Z^2$-layers
and to apply the original process independently on each layer. 

\begin{proposition}\label{prop:group} Let $H$ be a subgroup of $G$,
  and let $F_H$ be a process (PCA or IPS) of neighbourhood $\Neighb$
  and transition function $f$ that classifies the density on
  $\A^H$. We denote by $F_G$ the process on $\A^G$ having the same
  neighbourhood $\Neighb$ and the same transition function $f$. Then,
  $F_G$ classifies the density on $\A^G$. 
\end{proposition}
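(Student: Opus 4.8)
The plan is to exploit the fact that, since $\Neighb$ must be contained in $H$ for $F_H$ to be well defined on $\A^H$, the dynamics of $F_G$ decomposes along the left cosets of $H$ in $G$. I would first write $G$ as a disjoint union of left cosets $G=\bigsqcup_i g_i H$, and observe that for any cell $g=g_i h$ with $h\in H$, its neighbourhood $g\cdot\Neighb=g_i(h\cdot\Neighb)$ stays inside the coset $g_i H$, because $h\cdot\Neighb\subset H$. Hence the updated value of $F_G$ at any cell of $g_i H$ depends only on the restriction of the current configuration to $g_i H$, so the cosets evolve without interacting.

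Next I would make the conjugacy explicit. For each coset, the relabelling bijection $\A^H\to\A^{g_i H}$ sending $y$ to the configuration $x$ defined by $x_{g_i h}=y_h$ intertwines the restriction of $F_G$ to $g_i H$ with $F_H$ on $\A^H$: a one-line computation gives $F_G(x)_{g_i h}=f((x_{g_i h\cdot v})_{v\in\Neighb})=f((y_{h\cdot v})_{v\in\Neighb})=F_H(y)_h$. Under the induced identification $\A^G\cong\prod_i\A^{g_i H}$, the Bernoulli measure $\mu_p$ factorises as the product of its restrictions to the cosets, and each factor corresponds to $\mu_p$ on $\A^H$. Since the updating noise (for the PCA) and the Poisson clocks (for the IPS) are attached independently to the cells, they are in particular independent across cosets; consequently the process $\mu_p F_G^t$ is a product of independent copies of $\mu_p F_H^t$, one per coset. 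In particular, for any single cell $g=g_i h$, the law of its state at time $t$ under $F_G$ equals the law of the state of $h$ under $F_H$.

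To conclude I would invoke the single-cell characterisation of weak convergence recalled in Section~\ref{se:pb}: convergence of $\mu_p F_G^t$ to $\Dzero$ (resp. $\Done$) amounts to each cell being labelled $0$ (resp. $1$) with probability tending to $1$. Since $F_H$ classifies the density on $\A^H$, this single-cell property holds on $H$, hence on every coset by the conjugacy, hence on all of $G$; thus $F_G$ classifies the density. The argument is essentially bookkeeping, and the one point that genuinely needs care is to confirm that the inclusion $\Neighb\subset H$ closes each coset under the dynamics, and that restricting the product measure $\mu_p$ and the cell-indexed driving randomness to a single coset reproduces \emph{exactly} the law of the $F_H$ process started from $\mu_p$ — this is what lets the single-cell marginals transfer verbatim from $H$ to $G$.
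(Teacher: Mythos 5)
Your proof is correct and follows the same route as the paper: partition $G$ into cosets of $H$, note that $\Neighb\subset H$ keeps each coset dynamically closed, identify the restricted dynamics on each coset with $F_H$ started from $\mu_p$, and conclude via the single-cell characterisation of weak convergence. The paper's own proof is just a terser version of this, leaving implicit the points you spell out (the conjugacy, the factorisation of $\mu_p$ and of the driving randomness across cosets).
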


\begin{proof} Since $H$ is a subgroup, the group $G$ is partitioned into a union of classes
  $g_1H, g_2H,\ldots$ We have $\Neighb\subset H$, so that if an element
  $g\in G$ is in some class $g_iH$, then for any $v\in\Neighb$,
  the element $g\cdot v$ is also in $ g_iH$. Since $F_H$ classifies the density, on each
  class $g_iH$, the process $F_G$ satisfies (\ref{def_pb}). 
Thus for any cell of $G$, the probability that it is labelled by $0$
(resp. by $1$) tends to $1$ if $p<1/2$ (resp. if $p>1/2$). 
 \end{proof}

\section{Classifying the density on $\Z^2$: Toom's rule}\label{se:Z^2}

To classify the density on $\Z^2$, a natural idea is to apply
the majority rule on a cell and its four direct neighbours. Unfortunately, 
this does not work, neither in the CA nor in the IPS version. Indeed, an
elementary square of four cells in state 1 on a background of 0's is a
fixed point for the process. For $p\in (0,1)$, monochromatic
elementary squares of both colors
appear almost surely in the initial configuration which makes the convergence to
$\zero$ or $\one$ impossible. 

Another idea is to apply the majority rule on the four nearest neighbours
(excluding the cell itself) and to choose uniformly the new state of
the cell in case of equality. 
In the IPS setting, this process is known as the
Glauber dynamics associated to the Ising model. It has been
conjectured to classify the density, but the result has been proved
only for values of $p$ that are sufficiently close to $0$ or $1$~\cite{fontes}. 

\smallskip

To overcome the difficulty, we consider the majority CA but on the asymmetric  neighbourhood $\Neighb=\{(0,0),(0,1),(1,0)\}$.
We prove that this CA, known as Toom's rule \cite{toom,gacs},
 classifies the density on $\Z^2$.
Our proof relies on the properties of the percolation clusters on the triangular lattice~\cite{grimmett}. 
We then define an IPS inspired by this local rule and prove with the same techniques that it also classifies the density. 

\subsection{A cellular automaton that classifies the density}

Let us denote by $\maj:\A^3\rightarrow \A$, the majority function, so that 
$\maj(x,y,z)=0 \mbox{ if } x+y+z<2$ and $1 \mbox{ if } x+y+z\geq 2.$

\begin{theorem}\label{thm:toom}The cellular automaton $\T:\A^{\Z^2}\rightarrow\A^{\Z^2}$ defined by:
$$\T(x)_{i,j}=\maj(x_{i,j},x_{i,j+1},x_{i+1,j})$$
for any $x\in\A^{\Z^2}, (i,j)\in\Z^2,$ classifies the density.\end{theorem}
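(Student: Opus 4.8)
The plan is to combine three ingredients: the self-duality of $\maj$, the monotonicity and translation-invariance of $\T$, and the fact that site percolation on the triangular lattice has critical parameter $p_c=1/2$. Since $\maj(1-a,1-b,1-c)=1-\maj(a,b,c)$, the map $\T$ commutes with the global exchange of $0$ and $1$, which sends $\mu_p$ to $\mu_{1-p}$; it therefore suffices to treat the case $p<1/2$ and prove $\mu_p\T^t \xrightarrow[t\to\infty]{w}\Dzero$, the case $p>1/2$ following by symmetry. As $\T$ and $\mu_p$ are both translation-invariant, weak convergence to $\Dzero$ is equivalent to $\mathbb{P}(\T^t(x)_{0,0}=1)\to 0$, and the statement for an arbitrary finite $K\subset\Z^2$ then follows by a union bound over the cells of $K$.

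First I would identify the dependency region. Because the neighbourhood $\{(0,0),(0,1),(1,0)\}$ only looks up and to the right, $\T^t(x)_{0,0}$ depends solely on the initial labels in the triangle $\Delta_t=\{(i,j):i,j\ge 0,\ i+j\le t\}$. I read $\Z^2$ as the triangular lattice, in which $(i,j)$ is adjacent to its translates by $\pm(1,0)$, $\pm(0,1)$ and $\pm(1,-1)$. The key elementary observation is that any two of the three cells $(i,j)$, $(i,j+1)$, $(i+1,j)$ are neighbours in this lattice.

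The heart of the proof is a combinatorial lemma: if $\T^t(x)_{0,0}=1$, then the initial $1$'s inside $\Delta_t$ contain a cluster (connected for triangular adjacency) that reaches the far anti-diagonal $\{i+j=t\}$, and hence consists of at least $c\,t$ cells for some constant $c>0$. I would prove this by induction on $t$ using the $2$-out-of-$3$ structure of the majority: a value $1$ produced at a cell after one step certifies that two of its three neighbours already carried $1$; the induction hypothesis supplies a cluster for each of these two cells, and, because the two chosen neighbours are mutually adjacent, the two clusters glue into a single connected cluster one anti-diagonal deeper. The point that makes the cluster grow at unit speed is that in each of the three admissible pairs at least one chosen neighbour lies on the next anti-diagonal, so the glued cluster advances toward $\{i+j=t\}$ at every step.

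Finally, since $p<1/2$ is strictly below the critical value for triangular site percolation, the $1$'s are subcritical: almost surely there is no infinite $1$-cluster, and the diameter of the cluster of any fixed cell has an exponentially decaying tail. The probability that $\Delta_t$ carries a $1$-cluster stretching to distance of order $t$ therefore tends to $0$, whence $\mathbb{P}(\T^t(x)_{0,0}=1)\to 0$ and the desired convergence. I expect the main obstacle to be the combinatorial lemma itself: small configurations show that naive invariants (such as ``a path of $1$'s crossing from one leg of $\Delta_t$ to the other'', or ``a cluster of size $\ge t$'' with no depth requirement) are either false or not preserved by the induction, so the delicate part is to isolate a monotone, inductively stable property that forces both connectivity and linear growth and reflects Toom's eroder behaviour. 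By contrast, the symmetry and translation reductions and the percolation tail estimate are routine, and the IPS version stated afterwards should yield to the same cluster argument transported to its graphical representation.
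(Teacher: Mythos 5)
Your reduction to $p<1/2$, the identification of the dependence cone $\Delta_t$, and the final percolation estimate (exponential tail for subcritical clusters on the triangular lattice, plus a union bound over the $O(t^2)$ cells of $\Delta_t$) are all sound. The genuine gap is exactly where you suspect it: the combinatorial lemma, and specifically the gluing step of your induction. Your hypothesis attaches to each cell $c$ with $\T^{s}(x)_c=1$ a connected set of \emph{initial} $1$'s lying somewhere in the cone $c+\Delta_s$. When you unfold one step, the two chosen cells $c_1,c_2$ are indeed mutually adjacent, but adjacency of $c_1$ and $c_2$ says nothing about the relative positions of their witness clusters: these live at time $0$, may sit deep and laterally far apart inside the two cones, and nothing in the hypothesis anchors a witness to the cell it certifies (the witness cannot be forced to contain $c$, since $x_c$ may well be $0$). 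So ``the two clusters glue'' is unjustified, and this is precisely where Toom's eroder behaviour has to enter; flagging the difficulty does not close it. There is a second, smaller jump: even granting a connected witness that reaches the far anti-diagonal, ``hence at least $ct$ cells'' does not follow, since a cluster touching $\{i+j=t\}$ could a priori be a single cell there. Indeed the extreme witness --- all $1$'s exactly on the anti-diagonal $\{i+j=t\}$, which does make the origin equal to $1$ at time $t$ --- is large because of its \emph{lateral} extent, not because it spans levels, so the inductive invariant would have to force something like linear diameter, not mere reach.

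The lemma you need is true, but the natural proof of it is exactly the deterministic machinery of the paper's argument, transported from $0$-clusters to $1$-clusters by the self-duality you already invoked: (i) G\'acs's fact that the rule neither merges nor splits clusters, so distinct initial $1$-clusters evolve as if alone; (ii) confinement of each cluster to its bounding rectangle; (iii) the observation that $\max\{i+j\}$ over a finite cluster of $1$'s strictly decreases at each step (its North and East neighbours lie on a higher anti-diagonal, hence carry $0$), so a cluster whose bounding rectangle has width $w$ and height $h$ is erased by time $w+h+1$. Together these give: if $\T^t(x)_{0,0}=1$, some \emph{single} initial $1$-cluster inside $\Delta_t$ satisfies $w+h\ge t-1$, hence has at least $(t-1)/2$ cells, and your exponential bound $O\bigl(t^2 e^{-\gamma(t-1)/2}\bigr)\to 0$ finishes the proof. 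With that repair your argument works and even yields something the paper's does not state, namely a quantitative decay of $\mathbb{P}(\T^t(x)_{0,0}=1)$; the paper instead works on the supercritical side with the minority $0$-clusters and uses Borel--Cantelli to get almost-sure eventual fixation of each cell. But as written, your central lemma is asserted rather than proved, so the proposal is incomplete, and completing it requires importing essentially the same cluster facts the paper relies on.
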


\begin{proof}By symmetry, it is sufficient to prove that if $p>1/2$, then $(\mu_p\T^n)_{n\geq 0}$ converges weakly to $\Done$.

Let us consider the triangular lattice of sites (vertices) $\Z^2$ and
bonds (edges)
$\{\{(i,j),(i,j+1)\},\{(i,j),(i+1,j)\},\{(i+1,j),(i,j+1)\},(i,j)\in\Z^2\}$. 
%
We recall that a {\em $0$-cluster} is a subset of connected sites labelled by
$0$ which is maximal for inclusion. The site percolation
threshold on the triangular lattice is equal to $1/2$ so that, for
$p>1/2$, there exists almost surely no infinite $0$-cluster~\cite{grimmett}. 
Thus, if $S_{\zero}$ denotes the set of sites labelled by $0$, the set
$S_{\zero}$ consists almost surely of a countable union
$S_{\zero}=\cup_{k\in\N}S_k$ of finite $0$-clusters. 
Moreover, the size of the $0$-clusters decays exponentially: there
exist some constants $\kappa$ and $\gamma$ such that the probability
for a given site to be part of a $0$-cluster of size larger than $n$
is smaller than $\kappa e^{-\gamma n}$, see \cite{grimmett}. 

\smallskip

Let us describe how the $0$-clusters are transformed by the action of
the CA. 
For $S\subset \Z^2$, let $1_S$ be the configuration defined by
$(1_S)_x = 1$ if $x\in S$ and $(1_S)_x = 0$ otherwise. Let $\T(S)$ be the subset $S'$ of $\Z^2$ such that
$\T(1_S)=1_{S'}$. By a simple symmetry argument, this last equality is
equivalent to $\T(1_{\Z^2\setminus S})=1_{{\Z^2\setminus S'}}$. 
We observe the following. 

\begin{itemize}
\item The rule does not break up or connect different $0$-clusters
  (proved by G\'acs~\cite[Fact~3.1]{gacs}). More precisely, if $S$
  consists of the $0$-clusters $(S_k)_k$, then the
  components of $\T(S)$ are the nonempty sets among $(\T(S_k))_k$.   


\item Any finite $0$-cluster disappears in finite time: if $S$ is a
  finite and connected subset of $\Z^2$, then there exists an integer
  $n\geq 1$ such that $\T^n(S)=\emptyset$. This is the \emph{eroder}
  property~\cite{toom}. 


\item Let us consider a $0$-cluster and a rectangle in
  which it is contained. Then the $0$-cluster always remains within
  this rectangle. More precisely, if $R$ is a rectangle set, that is,
  a set of the form $\{(x,y)\in\Z^2 \mid a_1\leq x\leq a_2, \ b_1\leq
  y\leq b_2\}$, and if $S\subset R$, then for all $n\geq 1$,
  $\T^n(S)\subset R$ (proof by induction). 
\end{itemize}

Let us now consider all the $0$-clusters for which the minimal enveloping
rectangle contains the origin $(0,0)$. By the exponential decay of the
size of the clusters, one can prove that the number of such
$0$-clusters is almost surely finite. Indeed, the probability that the
point of coordinates $(m,n)$ is a part of such a cluster is smaller than
the probability for this point to belong to a $0$-cluster of size
larger than $\max(|m|,|n|)$.  
And since 
$$\sum_{(m,n)\in\Z^2}\kappa e^{-\gamma\max(|m|,|n|)}<4\kappa \sum_{m\in\N}(me^{-\gamma m}+\sum_{n\geq m} e^{-\gamma n})< \infty,$$
we can apply the Borel-Cantelli lemma to obtain the result. Let $T_0$
be the maximum of the time needed to erase these $0$-clusters. The
random variable $T_0$ is almost surely finite, and after $T_0$ time
steps, the site $(0,0)$ will always be labelled by a $1$. As the 
argument can be generalised to any site, it ends the proof. 
\end{proof} 

We point out that Toom's CA classifies the density despite having
many different invariant measures. For 
example:
\begin{itemize}
\item Any configuration $x$ that can be decomposed into monochromatic
North-East paths (that is, $x_{i,j}=x_{i,j+1}$ or $x_{i,j}=x_{i+1,j}$ for any $i,j$) is a fixed
point and $\delta_{x}$ is an invariant measure. 
\item Let $y$ be the checkerboard configuration defined by $y_{i,j}=0$ if $i+j$ is
even and $y_{i,j}=1$ otherwise, and let $z$ be defined by
$z_{i,j}=1- y_{i,j}$. Since we have $
\T(y) = z $ and $ \T(z)=y $, the two configurations $y$ and $z$ form a
periodic orbit and $(\delta_y+\delta_z)/2$ is an
invariant measure. 
\end{itemize}

\subsection{An interacting particle system that classifies the density}

We now define an IPS for which we use the same steps as above
to prove that it classifies the density.

Note that the exact IPS analog of Toom's
rule might classify the density but the above proof does not carry over since, in
some cases, different 
$0$-clusters may merge. To overcome the difficulty, we introduce a
different IPS with a new neighbourhood of size $7$: the
cell itself and the $6$ cells that are connected to
it in the triangular lattice defined in the previous section. 

For $\alpha\in \A$, set $\bar{\alpha} =
1-\alpha$.

\begin{theorem}\label{thm:ips}
Let us consider the following IPS: for a
configuration $x\in{\A}^{\Z^2}$, we update the state of the cell
$(i,j)$ by applying the majority rule on the North-East-Centre
neighbourhood, except in the following cases (for which we keep the
state unchanged): 
\begin{enumerate}
\item $x_{i,j}=x_{i-1,j+1}=x_{i+1,j-1}={\bar x_{i,j+1}}=\bar x_{i+1,j}$ and ($x_{i,j-1}={\bar x_{i,j}}$ or \\ $x_{i-1,j}={\bar x_{i,j}}$),
\item $x_{i,j}=x_{i-1,j+1}=x_{i,j-1}=\bar x_{i,j+1}=\bar x_{i+1,j}=\bar x_{i+1,j-1}$ and $x_{i-1,j}=\bar x_{i,j}$,
\item $x_{i,j}=x_{i-1,j}=x_{i+1,j-1}=\bar x_{i,j+1}=\bar x_{i+1,j}=\bar x_{i-1,j+1}$ and $x_{i,j-1}=\bar x_{i,j}$.
\end{enumerate}
This IPS classifies the density.
\end{theorem}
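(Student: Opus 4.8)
The plan is to transpose the proof of Theorem~\ref{thm:toom} to continuous time, isolating the three structural facts about $\T$ that drove the percolation argument and re-establishing each one for this IPS. By the $0\leftrightarrow1$ symmetry of the transition rule it is enough to treat $p>1/2$ and to show that for every finite $K\subset\Z^2$ the probability that all cells of $K$ carry a $1$ tends to $1$. I would reuse the percolation input verbatim: for $p>1/2$ there is almost surely no infinite $0$-cluster on the triangular lattice, the cluster sizes decay exponentially, and hence, by Borel--Cantelli, only finitely many $0$-clusters have a minimal enveloping rectangle meeting a fixed site. What must be re-proved in the asynchronous setting are the three facts used for $\T$: (a) a single update never connects two distinct $0$-clusters nor creates a new one; (b) every $0$-cluster stays inside its minimal enveloping rectangle; (c) every finite isolated $0$-cluster is erased in finite time.

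Property~(a) is precisely what the three exceptional cases are engineered to guarantee, and checking it is a finite case analysis on the seven-cell neighbourhood. The majority rule turns a $1$ at $(i,j)$ into a $0$ exactly when $x_{i,j+1}=x_{i+1,j}=0$; but the sites $(i,j+1)$ and $(i+1,j)$ are themselves adjacent in the triangular lattice, so they already belong to one common cluster $C$ and no merge can occur through them. A merge is possible only if $(i,j)$ has a \emph{further} $0$-neighbour---among $(i,j-1)$, $(i-1,j)$, $(i+1,j-1)$, $(i-1,j+1)$---lying in a cluster distinct from $C$. Enumerating which of these sites can host such a second cluster, together with the separating pattern certifying that the two clusters are genuinely distinct, produces exactly the configurations listed as cases~1--3; freezing the cell in each of them refines the partition into $0$-clusters but never coarsens it, so the correspondence $S_k\mapsto\T(S_k)$ used for $\T$ has an exact analogue here. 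Confinement~(b) then follows as before: the rule creates a $0$ at $(i,j)$ only when its North and East neighbours are already $0$, which forces $(i,j)$ to lie strictly inside the current enveloping rectangle, and freezing can only suppress updates.

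I expect the eroder property~(c) to be the main obstacle, because the modification is symmetric in $0$ and $1$ and therefore also freezes certain $0\to1$ flips, so one cannot simply invoke the bare majority rule near a lone cluster. The task is to show that no nonempty finite $0$-configuration is a fixed point of the modified IPS and that a suitable potential can be driven to $0$. The clean way is to verify that the modification still meets the hypotheses of Toom's eroder theorem: the three stable directions of the North-East-Centre majority rule survive, and each exceptional case obstructs only a \emph{split} of a cluster while leaving at least one erosion direction free, so that some boundary cell of any finite island is always genuinely erodible. Granting a finite deterministic update sequence that empties the island inside its rectangle, I would lift the statement to the IPS by the standard observation that, under independent rate-$1$ Poisson clocks, any prescribed finite sequence of single-cell updates occurs with positive probability in every bounded time window, so the island is almost surely erased in finite time.

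Finally I would assemble the pieces exactly as in Theorem~\ref{thm:toom}. Let $T_0$ be the maximum, over the finitely many $0$-clusters whose enveloping rectangle meets $K$, of the times needed to erase them; by~(a) and~(b) these clusters evolve without interacting and never leave their rectangles, so $T_0$ is almost surely finite, and after $T_0$ every cell of $K$ is labelled $1$. Since no update can create a $0$ at a site all of whose relevant neighbours are $1$, the set $K$ stays monochromatic in $1$ thereafter. This yields $\mu_pF^t\xrightarrow[t\to\infty]{w}\Done$, and by symmetry the IPS classifies the density.
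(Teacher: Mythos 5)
Your overall architecture --- (a) no merging/splitting of clusters, (b) confinement to the enveloping rectangle, (c) erosion of finite clusters, then Borel--Cantelli and a maximum erasure time --- is exactly the paper's, and your treatments of (a) and (b) are sound. The genuine gap is in (c), and it is precisely the step you flagged as the main obstacle. Your proposed resolution --- a \emph{deterministic} eroder property (``some boundary cell of any finite island is always genuinely erodible'') lifted to the IPS by noting that any finite prescribed update sequence occurs with positive probability --- fails, because the claim itself is false. Since the exceptional rules are symmetric in $0$ and $1$, they also freeze $0\to 1$ flips, and there are configurations in which \emph{no} cell of a finite $0$-cluster is allowed to flip: this is exactly the situation of Figure~\ref{fig:ips}(b) in the paper, where the cluster $C_t$ is entirely locked by an alternating chain of blocking patterns (the sets $W_n$ in states $n \bmod 2$) formed by \emph{other} $0$-clusters to its left. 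This also invalidates your reduction to ``isolated'' islands and your closing remark that the clusters ``evolve without interacting'': under $\mu_p$ every cluster a.s.\ has infinitely many other clusters nearby, and although clusters never merge, the freezing rules make the erosion of one cluster depend on the states of its neighbours. Toom's eroder criterion is of no help here either: the modified rule is not monotone, and the obstruction is a property of the environment, not of the island alone.

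What the paper does instead, and what your argument is missing, is an intrinsically probabilistic erosion proof with two monotonicity facts. One tracks $v(C_t)$, the upmost among the rightmost points of the cluster, and shows by case analysis on the freezing rules that $v(C_t)$ can never increase; then one shows that the only way $v(C_t)$ can be stuck forever is the infinite alternating blocking chain, which has probability $0$ under the product measure $\mu_p$. Finally, letting $m(t)$ be the depth of the blocking chain at time $t$, one checks that $m(t)$ is non-increasing and a.s.\ reaches $0$ in finite time (a domino effect: the cell where the chain breaks eventually flips, shortening the chain), so $v(C_t)$ a.s.\ decreases. Your ``positive probability in every bounded time window'' argument cannot substitute for this: the update sequence needed to unblock the extremal cell involves cells arbitrarily far from the island, its length depends on the current (random, evolving) environment, and without the monotonicity of $m(t)$ and $v(C_t)$ there is no guarantee of progress from one attempt to the next. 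So the erosion step must use the randomness of the initial measure, not a deterministic eroder property; as written, your proof does not go through.
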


The three cases for which we always keep the state unchanged are
illustrated below for the case where $x_{i,j}=1$ (central cell). In
the first case, we allow to flip the central cell if and
only if the two cells marked by a dashed circle are also labelled by
$1$. Otherwise, the updating could connect two different $0$-clusters
and break up the $1$-cluster to which the cell $(i,j)$ belongs to.
The second and third cases are analogous. 

\FigIPS

The proof is similar to the one of Theorem \ref{thm:toom} but involves some
additional technical points. 

 \begin{proof}We assume as before that $p>1/2$. Like the CA of the
   previous section, the new process that we have defined has the
   property not to break up or connect different clusters. 
 Furthermore, if we consider a $0$-cluster and the smallest rectangle
 in which it is contained, we can check again that the $0$-cluster
 will never go beyond this rectangle.  As before, we only need to prove
 that any finite $0$-cluster disappears almost surely in finite time to
 conclude the proof. 
 We consider a realisation of the trajectory of the IPS with initial density $\mu_p$. 
 We associate to any finite $0$-cluster $C\subset {\Z}^2$ the point
 $v(C)=\max\{(i,j)\in C\}$, where the order is the lexicographic order
 on the coordinates (we set $v(\emptyset)=(-\infty,-\infty)$). The
 point $v(C)$ is thus the upmost point of $C$ among its rightmost
 points. Let us consider at time $0$ some finite $0$-cluster $C_0$. We
 denote by $C_t$ the state of this cluster at time $t$. 

\smallskip

{\bf Claim.} {\em The value $v(C_t)$ is nonincreasing. Moreover,
   if $t\geq 0$ is such that $C_t\not=\emptyset$, then there exists
   almost surely a time $t'>t$ such that $v(C_{t'})<v(C_t)$.}

\smallskip

Let us prove the claim. 
 Let us denote by $x \in \A^{\Z^2}$ a configuration attained at some time $t$, and let $(i,j)=v(C_t)$. 
 By definition of $v(C_t)$, if a cell of coordinate $(i+1,j')$ is
 connected to a cell of $C_t$, then $x_{i+1,j'}=1$. Either we have also
 $x_{i+1,j'+1}=1$ and the cell $(i+1,j')$ will not flip. Or
 $x_{i+1,j'+1}=0$, but in this case, since $(i+1,j'+1)$ does not belong
 to $C_t$, $x_{i,j'+1}=1$ and the cell of $C_t$ to which is connected
 $(i+1,j')$ is necessarily $(i,j')$. So, $x_{i,j'}=0$ and
 $x_{i+1,j'-1}=1$, once again by definition of $v(C_t)$. Depending on
 the value of $x_{i+2,j'-1}$, either rule~1 or rule~2 forbids the cell $(i+1,j')$ to flip. 
 In the same way, we can prove that  if a cell of coordinate $(i,j'),
 j'>j$ is connected to $C_t$, then it is not allowed to flip. This
 proves that $v(C_t)$ is nonincreasing. 
 In order to prove the second part of the claim, we need to show that
 the cell $(i,j)$ will almost surely be flipped in finite
 time. By definition of $(i,j)=v(C_t)$, we know that
 $x_{i,j+1}=x_{i+1,j}=x_{i+1,j-1}=1$. The cell $(i,j)$ will thus be
 allowed to flip, except if $x_{i-1,j+1}=x_{i,j-1}=0$ and
 $x_{i-1,j}=1$. But in that case, the cell $(i-1,j)$ will end up
 flipping, except if $x_{i-1,j-1}=x_{i-2,j+1}=1, x_{i-2,j}=0$, and so on. Let
 $W_n=\{(i-n,j),(i-1-n,j+1),(i-n,j-1)\}$. If for each $n$, the cells of
 $W_n$ are in the state $(n \mod 2)$, then none of the cell $(i-n,j)$
 is allowed to flip (see Figure~\ref{fig:ips}.a). 
But recall now that the initial measure is $\mu_p$. There exists
 almost surely an integer $n\geq 0$ such that the initial state of the
 cell $(i-n,j)$ is {\it not}  $(n\mod 2)$. 
Let $m(t)$ be the smallest
 integer $n$ whose value at time $t$ is not $n \mod 2$. Then, one can easily check that $m(t)$
 is non-increasing, and that it reaches $0$ in finite time. Thus, the cell $(i,j)$ ends up flipping and we have proved the
 claim.  
 
\FigTheorem

 \smallskip

 The example of Figure~\ref{fig:ips}.b illustrates how the proof
 works. Here, no cell of the cluster $C_t$ is allowed to flip, but
 since the cells on the right and on the top of $v(C_t)$ cannot flip
 either, $v(C_t)$ does not increase. The cell at the left of $v(C_t)$
 will end up flipping, and $v(C_t)$ will then be allowed to flip. 

 \smallskip

 Since we know that a $0$-cluster cannot go beyond its enveloping
 rectangle, a direct consequence of the claim is that any $0$-cluster
 disappears in finite time. This allows us to conclude the proof in the
 same way as for the majority cellular automaton. 
 \end{proof} 

\section{Classifying the density on regular trees}\label{se:trees}

Consider the finitely presented  group $T_n=\langle a_1,\ldots,a_n \ |
\ a_i^2=1\rangle$. The Cayley graph of  $T_n$ is the infinite $n$-regular tree.
For $n=2k$, we also consider the free group with $k$ generators, that
is, $T'_{2k}=\langle a_1,\ldots,a_k \ | \ 
\cdot\rangle$. The groups $T_{2k}$ and $T'_{2k}$ are not isomorphic, but
they have the same Cayley graph. 

\subsection{Shortcomings of the nearest neighbour majority rules}

For odd values of $n$, a natural candidate for classifying the density 
is to apply the majority rule on
the $n$ neighbours of a cell. But it is proved that neither the
CA (see  \cite{kanoria} for $n=3,5,$ and 7) nor the IPS (see
\cite{howard} for $n=3$) classify the density. 


For $n=4$, a natural candidate would be to apply the majority on the four
neighbours and the cell itself. We now prove that it does not work either. 

\begin{proposition}
Consider the group $T'_4=\langle a,b \ |\ \cdot\rangle.$ Consider the
majority CA or IPS with neighbourhood $\Neighb =\{1,a,b,a^{-1}, b^{-1}\}$. 
For $p\in(1/3,2/3)$, the trajectories do not converge weakly to a
uniform configuration. 
\end{proposition}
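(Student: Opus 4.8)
The plan is to exhibit, with positive probability, a local structure around a fixed cell that is \emph{frozen} for all time, so that the marginal law at that cell stays bounded away from both $0$ and $1$ and hence cannot converge to either uniform measure. The natural candidate is a bi-infinite monochromatic path in the $4$-regular tree: such a path is self-sustaining under the majority rule, whereas no \emph{finite} structure can be, since a finite subtree always has a leaf whose single in-structure neighbour is not enough to outvote the remaining neighbours.

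First I would record the elementary stability property. Suppose a cell $c$ lies on a bi-infinite path $\ldots,c_{-1},c_0=c,c_1,\ldots$ in the tree, all of whose cells carry the value $1$. The rule computes $\maj$ over the five cells $\{c\}\cup\Neighb\cdot c$, and among these five are $c$ together with its two path-neighbours $c_{-1},c_1$. As long as these three carry the value $1$, the majority is $1$, so $c$ cannot flip. An induction then shows that every cell of the path keeps the value $1$ forever: for the synchronous CA this is immediate, and for the asynchronous IPS one argues along the sequence of clock rings, checking at each update time that the three relevant path-cells are simultaneously in state $1$. The same statement holds verbatim with $0$ in place of $1$.

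Next I would show that for $p>1/3$ the origin $o$ lies on such a bi-infinite path of $1$'s with positive probability. Root the tree at $o$; its four neighbours are the roots of four \emph{disjoint} subtrees, each a rooted ternary tree (every non-root cell having $3$ children). Let $r$ be the probability that a given neighbour is labelled $1$ and its subtree contains an infinite self-avoiding path of $1$'s issued from it. Since a tree cluster is infinite if and only if it contains an infinite self-avoiding ray, this is the survival probability of the branching process whose offspring number is $\mathrm{Binomial}(3,p)$; its mean $3p$ exceeds $1$ exactly when $p>1/3$, so $r>0$ precisely in that range (equivalently, $1/3$ is the site percolation threshold of the ternary tree). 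By independence of the four branches, with positive probability $o$ is labelled $1$ and at least two of its neighbours start infinite open rays; concatenating two such rays through $o$ yields a bi-infinite path of $1$'s through $o$. By the stability property, $o$ is then frozen at $1$, so $\mathbb{P}(x^t_o=1)$ is bounded below by a positive constant uniformly in $t$, which rules out weak convergence to $\Dzero$.

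By the symmetry $0\leftrightarrow 1$, $p\leftrightarrow 1-p$, the same argument applied to the $0$'s shows that for $1-p>1/3$, i.e. $p<2/3$, the origin is frozen at $0$ with positive probability, ruling out weak convergence to $\Done$. Hence for $p\in(1/3,2/3)$ neither uniform configuration can be a weak limit, and as $\zero$ and $\one$ are the only uniform configurations, the trajectories do not converge weakly to a uniform configuration. I expect the delicate point to be the branching-process estimate establishing $r>0$ for $p>1/3$ (together with the independence of the four branches), and — on the dynamical side — the careful verification that the freezing argument survives the asynchrony of the IPS, where one must track that the three path-cells governing a given update are jointly in the frozen state at the relevant clock ring.
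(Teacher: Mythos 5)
Your proof is correct and follows essentially the same route as the paper: a supercritical Galton--Watson argument on the ternary branching structure of the tree (threshold $p=1/3$ since the offspring law is $\mathrm{Binomial}(3,p)$), the observation that infinite monochromatic structures are frozen under the majority rule, and the $0\leftrightarrow 1$ symmetry to cover the other half of the interval. The difference is one of rigour rather than of method, but it is worth recording. The paper asserts that the ``infinite chains'' of ones produced by the Galton--Watson argument ``are fixed'', yet the one-ended ray that survival directly provides is \emph{not} obviously fixed: its endpoint sees only two ones (itself and its single path-neighbour) among the five cells of its neighbourhood, so it can flip, after which the ray can erode one cell per step, and no individual cell of it is guaranteed to stay at $1$. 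Your insistence on a \emph{bi-infinite} path --- obtained by requiring that at least two of the four disjoint subtrees hanging at the origin contain infinite open rays --- is precisely what makes the freezing argument valid, and anchoring that path at a fixed cell converts the conclusion into a $t$-uniform positive lower bound on the probability that the origin is in state $1$, which is exactly what the negation of weak convergence to $\Dzero$ requires (the paper's ``almost surely somewhere in the tree'' does not by itself address a fixed cell). So your proposal is a faithful but tightened version of the paper's proof, closing the two points it leaves implicit.
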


\begin{proof}
If
$p\in(1/3,2/3)$, then we claim that at time $0$, there are almost
surely infinite chains of zeros and infinite chains of ones that are
fixed. Let us choose some cell labelled by $1$. 
Consider the (finite or infinite) subtree of
1's originating from this cell viewed as the root. If we forget the
root, the random tree is exactly
a Galton-Watson process. 
Indeed, the expected number of children of a node is  $3p$ and 
since $3p>1$, this Galton-Watson process survives with positive probability. 
Consequently, there exists almost surely an infinite chain of ones at time $0$
somewhere in the tree. In the same way, since $3(1-p)>0$, there exists
almost surely an infinite chain of zeros. 
\end{proof}

As for $\Z^2$, we get round the difficulty by keeping the majority
rule but choosing a non-symmetrical neighbourhood. 

\subsection{A rule that classifies the density on $T'_4$}

In this section, we consider the free group $T'_4=\langle a,b |\cdot\rangle,$ see Fig. \ref{fig:CAtheo} (a).

\begin{theorem}\label{thm:T4} The cellular automaton $F:\A^{T'_{4}}\rightarrow\A^{T'_{4}}$ defined by: 
$$F(x)_g=\maj(x_{ga},x_{gab},x_{gab^{-1}})$$
for any $x\in\A^{T'_{4}}, g\in T'_{4}$, classifies the density.
\end{theorem}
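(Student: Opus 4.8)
The plan is to mirror the structure of the proof of Theorem \ref{thm:toom} for $\Z^2$, since the cellular automaton $F$ on $T'_4$ is again a majority rule on an asymmetric three-cell neighbourhood. By symmetry it suffices to treat $p>1/2$ and show $(\mu_p F^n)_{n\geq 0}$ converges weakly to $\Done$. The overall template should be: (i) identify the right notion of ``cluster'' of $0$'s and a percolation-type threshold guaranteeing that, for $p>1/2$, all $0$-clusters are almost surely finite with exponentially decaying size; (ii) verify that $F$ does not connect or break up distinct $0$-clusters; (iii) verify an eroder property, namely that every finite $0$-cluster is erased in finite time; and (iv) verify a confinement property analogous to the rectangle argument, so that a cluster cannot escape to influence a fixed cell forever. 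Then a Borel--Cantelli argument over the (finitely many) clusters whose ``envelope'' covers a given vertex finishes the proof.

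\textbf{The percolation input.}
The first step is to understand site percolation on the tree $T'_4$, whose Cayley graph is the $4$-regular tree. On a $d$-regular tree the site percolation threshold is $1/(d-1)$, so here it equals $1/3$. Thus for $p>1/3$ an infinite $1$-cluster appears with positive probability, and for $p>1/2$ in particular there is no infinite $0$-cluster and the $0$-clusters are almost surely finite. I would need the exponential decay of cluster sizes in the subcritical regime $1-p<1/2$ for $0$'s; on trees this is explicit, since the $0$-cluster containing a fixed vertex is dominated by a subcritical Galton--Watson tree (as already exploited in the preceding Proposition), whose total progeny has an exponential tail. This is cleaner than the $\Z^2$ case because the branching structure is exact rather than approximate.

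\textbf{The combinatorial heart: eroder and non-interaction.}
The genuinely new work is geometric. The rule $F(x)_g=\maj(x_{ga},x_{gab},x_{gab^{-1}})$ reads three neighbours of $g$ that all lie ``downstream'' in a consistent direction determined by the generators, so one should set up a partial order or ``height'' function on the tree reflecting this directedness, and show (a) that the image under $F$ of a single $0$-cluster stays connected and disjoint from the images of other clusters — the analog of G\'acs's Fact~3.1 — and (b) the eroder property, that a finite connected set of $0$'s is annihilated in finitely many steps. For the eroder property I expect to argue via a monovariant: pick the extremal vertex of a finite $0$-cluster with respect to the height/branch order (analogous to $v(C)$ in the IPS proof), show its three read-neighbours are $1$ by extremality, hence it flips to $1$, and conclude the cluster shrinks; iterating erases it. The confinement property (iv) should follow by a straightforward induction showing the $0$-cluster remains inside the convex hull (in the tree metric) of its initial support, so that only finitely many clusters can ever affect a fixed vertex.

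\textbf{Main obstacle.}
The hard part will be step (iii)/(a): on $\Z^2$ the non-interaction of clusters is quoted from G\'acs, but on the tree one must reprove that the asymmetric majority rule neither merges nor fragments $0$-clusters, and that finite clusters are genuinely eroded rather than stabilising into a fixed configuration. The tree has no short cycles, so some configurations that are fixed points on $\Z^2$ (monochromatic paths, checkerboards) may or may not have analogs here; I would need to check carefully that the directed triple $\{ga,gab,gab^{-1}\}$ cannot support a nontrivial finite $0$-pattern invariant under $\maj$. Once the eroder and non-interaction facts are established, the Borel--Cantelli conclusion transfers essentially verbatim: the number of $0$-clusters whose envelope contains a fixed vertex is almost surely finite by exponential decay, each is erased in finite time, so the vertex is eventually and permanently labelled $1$, giving weak convergence to $\Done$.
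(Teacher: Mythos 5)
There is a genuine gap, and it is fatal to the whole strategy: your percolation input is wrong. On the $4$-regular tree the site percolation threshold is $1/3$, not $1/2$ (you state this yourself), and by symmetry this means the $0$'s, which have density $1-p$, percolate whenever $1-p>1/3$, i.e.\ whenever $p<2/3$. So for every $p\in(1/2,2/3)$ there are almost surely \emph{infinite} $0$-clusters in the initial configuration, and the plan ``all $0$-clusters are finite, erode each one, apply Borel--Cantelli'' cannot even begin in that regime. This is precisely the dichotomy that makes Toom's argument work on $\Z^2$ --- the site percolation threshold of the triangular lattice is exactly $1/2$, so for $p>1/2$ only the $1$'s percolate --- and precisely what breaks on trees, where for all $p\in(1/3,2/3)$ both colours percolate simultaneously. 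Indeed, the proposition immediately preceding Theorem \ref{thm:T4} in the paper (whose Galton--Watson argument you cite approvingly) exploits this very coexistence to show that the \emph{symmetric} majority rule fails on $T'_4$; the same fact shows your cluster-erosion scheme can at best handle $p>2/3$ (and $p<1/3$), which falls short of the theorem.

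The paper's proof takes an entirely different, non-geometric route, and it is worth understanding why. The neighbourhood $\{ga,gab,gab^{-1}\}$ is chosen not to make clusters shrink but because $\{a,ab,ab^{-1}\}$ is a \emph{code} in the free group: any word in these three blocks factors uniquely. Consequently, if one defines $\E_k(n)$ as the family of states at time $n$ at all cells indexed by words $u_1u_2\cdots u_k$ with $u_i\in\{a,ab,ab^{-1}\}$, then distinct members of $\E_k(n+1)$ are majorities over \emph{disjoint} triples from $\E_{k+1}(n)$, and an induction shows each $\E_k(n)$ consists of i.i.d.\ Bernoulli variables of parameter $h^n(p)$, where $h(p)=3p^2-2p^3$ is the probability that a majority of three independent Bernoulli($p$) variables equals $1$. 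Since $h^n(p)\to 0$ for $p<1/2$ and $h^n(p)\to 1$ for $p>1/2$, the state at the origin converges in probability to the correct symbol, which is exactly weak convergence of $\mu_pF^n$ to $\Dzero$ or $\Done$. In short: on the tree the freeness of the group buys exact propagation of independence, replacing the percolation dichotomy that is simply unavailable there; any correct proof must use something like this, and no eroder/cluster argument in the style of Theorem \ref{thm:toom} can close the gap on $(1/2,2/3)$.
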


\begin{proof} We consider a realisation of the trajectory of the CA
  with initial distribution $\mu_p$. Let us denote by $X^n_g$ the random
  variable describing the state of the cell $g$ at time $n$. Since the
  process is homogeneous, it is sufficient to prove that $X^n_1$
  converges almost surely to $0$ if $p<1/2$ and to $1$ if $p>1/2$. 
Let us denote by $h:[0,1]\rightarrow[0,1]$ the function that maps a
given $p\in[0,1]$ to the probability $h(p)$ that $\maj(X,Y,Z)=1$ when
$X,Y,Z$ are three independent Bernoulli random variables of parameter
$p$. An easy computation provides $h(p)=3p^2-2p^3$, and one can check
that the sequence $(h^n(p))_{n\geq 0}$ converges to $0$ if $p<1/2$ and
to $1$ if $p>1/2$.  

We prove by induction on $n\in\N$ that for any $k\in\N$, the family
$\E_k(n)=\{X_{u_1u_2\ldots u_k}^n \ | \ u_1, u_2,\ldots,
u_k \in \{a,ab,ab^{-1}\}\}$ consists of independent Bernoulli random
variables of parameter $h^n(p)$. By definition of $\mu_p$, the
property is true at time $n=0$. Let us assume that it is true at some
time $n\geq 0$, and let us fix some $k\geq 0$. Two different elements
of $\E_k(n+1)$ can be written as the majority on two disjoint triples
of $\E_{k+1}(n)$. The fact that the triples are disjoint is a
consequence of the fact that $\{a, ab, ab^{-1}\}$ is a code: a given
word $g\in G$ written with the elementary patterns $a, ab, ab^{-1}$
can be decomposed in only one way as a product of such patterns. By
hypothesis, the family $\E_{k+1}(n)$ is made of i.i.d. Bernoulli
variables of parameter $h^n(p)$, so the variables of $\E_k(n+1)$ are
independent Bernoulli random variables of parameter
$h^{n+1}(p)$. Consequently, the process $F$ classifies the density on
$T'_4$.\end{proof} 

Let us mention that from time $n\geq 1$, the field $(X^n_g)_{g\in G}$
is not i.i.d. For example, $X^1_1$ and $X^1_{ab^{-1}a^{-1}}$ are not
independent since both of them depend on $X^0_a$. 

\smallskip 

On $T'_{2k}=\langle a_1,\ldots,a_k| \cdot\rangle$, one can either
apply Prop.~\ref{prop:group} to obtain a cellular automaton that
classifies the density, or define a new CA by the following formula: 
$F(x)_g=\maj(x_{ga_1},x_{ga_1a_2},x_{ga_1a_2^{-1}},\ldots,x_{ga_1a_k},x_{ga_1a_k^{-1}})$ and check that it is also classifies the density.

\smallskip

It is also possible to adapt the above proof to show that the IPS 
with the same local rule also classifies the density. 

\FigCAtheo

\subsection{A rule that classifies the density on $T_3$}

We now consider the group $T_3=\langle a, b, c \ | \ a^2=b^2=c^2=1\rangle$.

\begin{theorem}\label{thm:T3} The cellular automaton $F:\A^{T_3}\rightarrow\A^{T_3}$ defined by: 
$$F(x)_g=\maj(x_{gab},x_{gac},x_{gacbc})$$
for any $x\in\A^{T_3}, g\in T_3$, classifies the density.
\end{theorem}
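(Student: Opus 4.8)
The plan is to mimic the proof of Theorem~\ref{thm:T4} as closely as possible, exploiting the fact that $\maj(p,p,p)$-type iteration gives the same scalar map $h(p)=3p^2-2p^3$, which already converges to the correct limits for $p\neq 1/2$. So the only real work is to reproduce the independence bookkeeping in the new setting. First I would set $X^n_g$ for the state of cell $g$ at time $n$ under the realisation with initial law $\mu_p$, note that by homogeneity it suffices to study $X^n_1$, and recall that $h^n(p)\to 0$ for $p<1/2$ and $h^n(p)\to 1$ for $p>1/2$. The goal is then to find, for each $k$, a suitably indexed family $\E_k(n)$ of cells whose states are i.i.d.\ Bernoulli of parameter $h^n(p)$, with the property that each variable in $\E_k(n+1)$ is the majority of three variables lying in $\E_{k+1}(n)$, and that distinct variables of $\E_k(n+1)$ use \emph{disjoint} triples. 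In particular $X^n_1$ will be forced to be Bernoulli$(h^n(p))$, giving almost sure convergence to the right uniform configuration.

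The key algebraic point, exactly as in Theorem~\ref{thm:T4}, is a coding property. Here the local rule reads each state at the three ``offsets'' $ab,\ ac,\ acbc$, so I would consider the set $P=\{ab,\ ac,\ acbc\}$ and try to show that $P$ is a code in $T_3=\langle a,b,c\mid a^2=b^2=c^2=1\rangle$: every element reachable as a product $u_1u_2\cdots u_k$ with $u_i\in P$ admits a \emph{unique} such factorisation, and moreover distinct length-$k$ products, when expanded one further step, decompose into triples of length-$(k+1)$ products that share no common element. The natural way to verify this is to pass to reduced words over $\{a,b,c\}$ modulo the relations $a^2=b^2=c^2=1$: one checks that each generator $u_i\in P$ has a reduced form ($ab$, $ac$, $acbc$ of reduced lengths $2,2,4$) and that concatenating such blocks produces no cancellation at the seams, so the block boundaries can be recovered unambiguously from the reduced word. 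This is the analogue of ``$\{a,ab,ab^{-1}\}$ is a code'' and is what guarantees the triples feeding distinct cells of $\E_k(n+1)$ are disjoint.

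With the coding property in hand, the induction is routine and identical in form to the previous theorem: at $n=0$ the family $\E_k(0)$ is i.i.d.\ Bernoulli$(p)$ because $\mu_p$ is a product measure and the indexing cells are distinct (again by the code property); assuming $\E_{k+1}(n)$ is i.i.d.\ Bernoulli$(h^n(p))$, each element of $\E_k(n+1)$ is $\maj$ of three distinct, hence independent, such variables, so it is Bernoulli$(h^{n+1}(p))$, and disjointness of the triples across different elements yields joint independence within $\E_k(n+1)$. Taking $k$ large enough (or simply $k$ with $1\in\E_k(n)$ for all $n$, e.g.\ the empty product) shows $X^n_1\sim$ Bernoulli$(h^n(p))$, and the convergence of $h^n(p)$ finishes the argument.

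The main obstacle I anticipate is precisely the coding/disjointness verification, which is more delicate than in the free-group case of Theorem~\ref{thm:T4}. In $T'_4$ the absence of relations makes ``no cancellation at the seams'' automatic, whereas in $T_3$ the involutive relations $a^2=b^2=c^2=1$ mean one must actively rule out cancellation between the end of one block and the start of the next---for instance checking that a block ending in $c$ (as $acbc$ does) is never followed by a block beginning with $c$ in a way that collapses length. I would therefore spend most of the effort confirming that the three blocks $ab,\ ac,\ acbc$ are chosen so that all seam concatenations are reduced (e.g.\ every block ends in $b$ or $c$ and begins in $a$, so seams are of the form $\cdots a$ or $\cdots b\,a\cdots$ and never create an $a^2$, $b^2$, or $c^2$), thereby establishing that $P$ generates a free sub-monoid and that the required triples are genuinely disjoint. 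Once that combinatorial lemma is secured, the probabilistic part carries over verbatim.
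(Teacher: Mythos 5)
Your proposal is correct and follows essentially the same route as the paper: an induction showing that the families $\E_k(n)$ indexed by products of the blocks $ab$, $ac$, $acbc$ are i.i.d.\ Bernoulli$(h^n(p))$, with the key point being that $\{ab,ac,acbc\}$ is a code in $T_3$ (the paper states this in one line, while you additionally sketch the seam/no-cancellation verification, which is a correct way to justify it). No gap here; the argument carries over from Theorem~\ref{thm:T4} exactly as you describe.
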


\begin{proof}The proof is analogous to the previous case. We prove by
  induction on $n\in\N$ that for any $k\in\N$, that the family
  $\E_k(n)=\{X_{u_1u_2\ldots u_k}^n \ | \ u_1, u_2,\ldots,
  u_k\in\{ab,ac,acbc\}\}$ consists of independent Bernoulli random
  variables of parameter $h^n(p)$, the key point being that $\{ab, ac,
  acbc\}$ is a code.\end{proof} 

Once again, as explained in Prop.~\ref{prop:group}, since we have a
solution on $T_3$, we obtain a CA that classifies the density for any
$T_n, n\geq 3,$ by applying exactly the same rule. The corresponding IPS on $T_n$ also classifies the density. 

\section{Classifying the density on $\Z$}\label{se:Z}

The one-dimensional case appears as much more difficult than the other cases and we are not aware of any solution to the density classification problem
on~$\Z$. 
However, if we slightly change the formulation of the problem, simple
solutions do exist. We first give one such modification 
and then go back to the original problem and describe three models,
two CA and one PCA, that are conjectured to classify the density. We
also provide some preliminary analytical results as well as
experimental confirmations of these results by using numerical
simulations.

\smallskip

In the examples below, the {\em traffic} cellular automaton, rule 184
according to Wolfram's notation, plays a central role. It is the CA
with neighborhood ${\cal N}=\{-1,0,1\}$ and local function $\traf$ defined by:
$$\begin{array}{|c|c|c|c|c|c|c|c|c|}
\hline
x,y,z & 111 & 110 & 101& 100 & 011 & 010 & 001 & 000\\
\hline
\traf(x,y,z) & 1 & 0 & 1 & 1         & 1 & 0 & 0 & 0\\
\hline
\end{array}$$

This CA can be seen as a simple model of traffic flow on a single
lane: the cars are represented by 1's moving one step to the right if
and only if there are no cars directly in front of them. 
It is a density-preserving rule.

\subsection{An exact solution with weakened conditions}\label{sse-weak}

On finite rings, several models have been proposed that solve relaxed
variants of the density classification problem. We concentrate
on one of these models introduced in \cite{legloannec}. The original
setting is modified since the model operates
on an extended alphabet, and the criterium for convergence is also weakened. Modulo this relaxation, it solves the problem
on finite rings $\Z_n$. We show the same result on $\Z$. 

\begin{proposition} Consider the cellular automaton $F$ on the alphabet $\B=\A^2$, 
with neighbourhood ${\cal N}= \{-1,0,1\}$, and local
  function $f=(f_1,f_2)$ defined by:
\begin{equation}\label{eq-bastien}
f_1(x,y,z) = \traf (x_1,y_1,z_1) \quad ; \quad
f_2(x,y,z)= \begin{cases}
0 &\mbox{ if } x_1=y_1=0\\
1 &\mbox{ if } x_1=y_1=1 \\
y_2 &\mbox{ otherwise}
\end{cases}
\end{equation}
The projections $\mu_pF^n(\A^{\Z}\times \cdot)$ converge
to $\Dzero$ if $p<1/2$ and to $\Done$ if $p>1/2$.  
\end{proposition}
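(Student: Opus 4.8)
The plan is to exploit that the coupling between the two components is one-way. I write a configuration over $\B=\A^2$ as a pair $(X^{(1)},X^{(2)})$ of configurations in $\A^{\Z}$, with time written in parentheses and the cell in subscript. Since $f_1$ depends only on the first components, the process $X^{(1)}$ is autonomous and is exactly the trajectory of the traffic automaton $\traf$ (rule 184) started from $\mu_p$; it is density-preserving. The second component is a passive \emph{memory}: by \eqref{eq-bastien}, $X^{(2)}_i(n+1)$ equals $0$ if $X^{(1)}_{i-1}(n)=X^{(1)}_i(n)=0$, equals $1$ if $X^{(1)}_{i-1}(n)=X^{(1)}_i(n)=1$, and is left unchanged otherwise; in particular its limit will not depend on the initial values of $X^{(2)}$. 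By translation invariance of $\mu_p$ and of $F$, and by the characterisation of weak convergence recalled in Section~\ref{se:pb}, it then suffices to prove the single-cell almost-sure statement $X^{(2)}_0(n)\to 0$ when $p<1/2$ and $X^{(2)}_0(n)\to 1$ when $p>1/2$.

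The heart of the argument is the following property of rule 184, which I would isolate as a lemma: for the trajectory from $\mu_p$ and a fixed adjacent pair $(i-1,i)$, (i) if $p<1/2$ then almost surely the pair $(X^{(1)}_{i-1}(n),X^{(1)}_i(n))$ equals $(1,1)$ for only finitely many $n$ while it equals $(0,0)$ for infinitely many $n$; (ii) if $p>1/2$ the same holds with $0$ and $1$ interchanged. The guiding idea is the classical ballistic picture of rule 184: the $1$'s behave as right-moving cars and the $0$'s as left-moving holes, a persistent block $11$ being a car-jam and a persistent block $00$ a hole-jam. When $p<1/2$ the cars are in the minority, the car-jams present in the Bernoulli initial condition dissolve into free-flowing traffic and never re-form, so $(1,1)$ is seen at a fixed edge only finitely often; meanwhile the strict surplus of holes ($1-p>1/2$) keeps a positive density of adjacent holes at all times, yielding $(0,0)$ infinitely often. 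Case (ii) follows from (i) by the self-duality of rule 184 under the simultaneous bit-flip $0\leftrightarrow 1$ and spatial reflection.

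Granting the lemma, I conclude for $p<1/2$ (the case $p>1/2$ being symmetric). Set $T=\sup\{n:(X^{(1)}_{-1}(n),X^{(1)}_0(n))=(1,1)\}$, which is almost surely finite by~(i). For every $n>T$ the memory $X^{(2)}_0$ is either reset to $0$ or kept unchanged, hence $(X^{(2)}_0(n))_{n>T}$ is non-increasing in $\{0,1\}$ and converges; since $(0,0)$ occurs at some time $n>T$, the memory is pinned to $0$ from then on, so $X^{(2)}_0(n)\to 0$ almost surely. Together with the reduction above, this shows that the second-component marginal $\mu_pF^n(\A^{\Z}\times\cdot)$ converges weakly to $\Dzero$ for $p<1/2$, and, by the symmetric argument, to $\Done$ for $p>1/2$.

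The main obstacle is the lemma, where all the probabilistic content sits: one must upgrade the informal ``jams dissolve'' statement into an almost-sure claim at a \emph{fixed} edge, controlling the transient minority-clusters of the Bernoulli initial condition and ensuring both the eventual disappearance of the minority pair and the recurrence of the majority pair. I would establish it either by invoking the known convergence results for rule 184 (an exactly solvable single-lane traffic / ballistic-annihilation model), or by constructing the car--hole annihilation coupling explicitly and using that, because $p\neq 1/2$, the minority particles have a strictly ballistic net drift, so a given site is crossed by the boundary of a minority-jam only finitely many times.
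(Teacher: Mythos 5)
Your overall architecture is sound and, in fact, coincides with the paper's: the first tape is the autonomous traffic CA, the second tape is a passive memory reset by the pairs $(0,0)$ and $(1,1)$ of first-tape states at the edge $(i-1,i)$, the weak-convergence statement reduces (by translation invariance and a union bound) to an almost-sure single-cell statement, and the duality of $\traf$ under simultaneous bit-flip and reflection correctly reduces the case $p>1/2$ to $p<1/2$. Your deduction of the conclusion from your lemma (after the a.s. last occurrence of the minority pair, the memory is monotone and gets pinned by the next majority pair) is also correct.

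The genuine gap is the lemma itself, which you acknowledge carries all the probabilistic content but do not prove, and neither of your two proposed routes closes it as stated. Invoking ``known convergence results for rule 184'' is insufficient: weak convergence of $\mu_p T^n$ (where $T$ is the traffic CA) only says that the \emph{probability} of seeing $(1,1)$ at a fixed edge at time $n$ tends to $0$ when $p<1/2$; without quantitative decay rates and a Borel--Cantelli argument this does not yield your almost-sure claim that $(1,1)$ occurs at that edge only finitely often, let alone that $(0,0)$ recurs. What the paper does instead is pathwise. It recodes via $\psi(x)_i=1-x_i-x_{i-1}$ into ballistic annihilation ($+1$ particles are $00$ pairs moving right, $-1$ particles are $11$ pairs moving left), takes from Belitsky--Ferrari that the \emph{eternal} majority particles (those never annihilated) have positive density $|2p-1|$, and then tracks the first eternal majority particle on the appropriate side of the origin: being eternal, it crosses the column of cell $0$ at an a.s. finite time $X$, producing the majority pair there; and after time $X$ no minority particle can ever occupy cell $0$, because its backward world-line (of opposite slope) would have to intersect the eternal particle's world-line, forcing an annihilation and contradicting eternity. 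This crossing/contradiction argument is exactly the step that upgrades the informal ``jams dissolve and have ballistic drift'' picture into the almost-sure fixed-edge statement; your sketch gestures at the drift but never identifies the eternal particles or the world-line crossing, which is where the actual work lies.
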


Intuitively, the CA operates on two tapes: on the first tape, it
simply performs the traffic rule; on the second tape, what is recorded
is the last occurrence of two consecutive zeros or ones in the first
tape. If $p<1/2$, then, on the first tape, there is convergence to
configurations which alternate between patterns of types $0^k$ and
$(10)^{\ell}$. Consequently, on the second
tape, there is convergence to the configuration $\Dzero$. 
We formalise the argument below. 

\begin{proof} Let $T: \A^{\Z}\rightarrow \A^{\Z}$ be the
  traffic CA, see above. Following an idea of Belitsky and
Ferrari~\cite{belitsky}, we
define the recoding $\psi:\A^\Z\rightarrow
\{-1,0,1\}^\Z$ by $\psi(x)_i=1-x_i-x_{i-1}$. 
Consider $(\psi \circ T^n(x))_{n\geq 0}$,
  the recodings of the trajectory of the CA originating from
  $x\in \{0,1\}^{\Z}$. 
There is a convenient alternative way to describe $(\psi
  \circ T^n(x))_{n\geq 0}$. It corresponds to the trajectories in the
  so-called \emph{Ballistic Annihilation} model: $1$ and $-1$ are interpreted as particles that we call respectively positive and negative particles. Negative particles move one cell to the left at each time step while positive particles move one cell to the right; and when two particles of different types meet, they annihilate. 

Consider the Ballistic Annihilation model with initial condition
$\mu_p \psi$ for $p>1/2$. The density of negative particles is
$p^2$, while the density of positive particles is $(1-p)^2$. During the
evolution, the density of positive particles decreases to 0, while the
density of negative particles decreases to $2p-1$. In particular, the
negative particles that will never disappear have density $2p-1$ (see \cite{belitsky} for
  details).  
We can
track back the position at time 0 of the ``eternal''
negative particles. Let $X$ be the (random) position of the first eternal
particle on the right of cell $0$. After time $X$, the column 0 in the
space-time diagram
contains only $0$ or $-1$ values. This key point is illustrated in the figure below.
\begin{center}
\putfig{0.45}{particles}
\end{center}

We now go back to the traffic CA with initial condition
distributed according to $\mu_p$ for $p>1/2$ and concentrate on two 
consecutive columns of
the space-time diagram. The property tells us that after some
almost surely finite time, the columns contain only the patterns $11,
01$, or $10.$ 

For the CA defined by Eq. \ref{eq-bastien} with an initial  condition
distributed according to a measure $\mu$ satisfying $\mu(\cdot \times
\A^\Z)=\mu_p$ for $p>1/2$, the above key point gets translated as follows: in
any given column of the space-time diagram, after some a.s. finite
time, the column contains only the letters $(0,1)$ or $(1,1)$. In
particular, $\mu_pF^t(\A^{\Z}\times \cdot)$ converges weakly
to $\Done$ if $p>1/2$. 
\end{proof}

\subsection{Density classifier candidates on $\Z$}\label{sse-conj}
\label{se:ex}

\paragraph{The GKL cellular automaton.}

The G\'acs-Kurdyumov-Levin (GKL) cellular automaton is the CA with 
neighbourhood $\Neighb=\{-3,-1,0,1,3\}$ defined by 
$$\gkl(x)_k=
\begin{cases}
\maj(x_k,x_{k+1},x_{k+3}) &\mbox{  if  } x_k=1\\
\maj(x_k,x_{k-1},x_{k-3}) &\mbox{  if  } x_k=0.\\
\end{cases}
$$
for any $x\in\A^{\Z}, k\in\Z.$ 

The GKL CA  is known to be one
of the best performing CA for the density classification on finite
rings (see Fig.~\ref{fig:Kari}). 
It has also been proven to have the \emph{eroder} property: if
the initial configuration contains only a finite number of ones
(resp. zeros), then it reaches  $\zero$ (resp. $\one$) in finite time, see
\cite{gonzaga}. 


\paragraph{Kari traffic cellular automaton.}

This CA is defined by the composition of the two following
rules applied sequentially at each time step: (a) apply the traffic rule, (b) change the $1$ into a $0$ in every pattern $0010$ and the
  $0$ into a $1$ in every pattern $1011$ (see Fig.~\ref{fig:Kari}).

Like GKL, Kari traffic CA has a neighbourhood of radius $3$. Both CA
also share the combined symmetry consisting in swapping $0$ and $1$
and right and left. Kari
traffic has also the eroder property and it appears to have
comparable qualities to GKL concerning the density classification
task, see \cite{legloannec}. Kari traffic CA is closely related to
Kurka's modified version of GKL~\cite{kurka}.


%
%

\FigDiagGKLKari

\paragraph{The majority-traffic probabilistic cellular automaton.}

The majority-traffic PCA of parameter $\alpha\in(0,1)$ is the PCA of neighbourhood
$\Neighb=\{-1,0,1\}$ and local function:
$$\varphi(x,y,z)= \alpha \,\delta_{\maj(x,y,z)}+ (1-\alpha)\,\delta_{\traf(x,y,z)}.$$
In words, at each time step, we choose, independently for each cell, to apply
the majority rule with probability $\alpha$ and the traffic rule with
probability $1-\alpha$ (see Fig.~\ref{fig:TraMaj}).

The majority-traffic PCA has been introduced
by Fat\`es \cite{fates} who has proved that it ``classifies'' the density on a finite ring with an arbitrary precision:
for any $n\in\N$ and any $\varepsilon>0$, there exists a value
$\alpha_{n,\varepsilon}$ of the parameter such that on $\Z_n$, the PCA
converges to the right uniform configuration with probability greater
than $1-\varepsilon$.

\FigDiagTraMaj

\begin{conjecture}\label{co-classify}
The GKL CA, the Kari traffic CA, and the  majority-traffic PCA with
$0<\alpha<\alpha_c$ (for some $0<\alpha_c\leq 1/2$) classify the density.
\end{conjecture}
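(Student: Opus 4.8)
The plan is to transport the cluster/eroder strategy that works on $\Z^2$ (Theorem~\ref{thm:toom}) and on the trees to the one-dimensional setting, treating the three models through a common \emph{defect} picture. For the two traffic-based rules I would start from the ballistic-annihilation recoding $\psi(x)_i = 1 - x_i - x_{i-1}$ used in Section~\ref{sse-weak}: under the pure traffic rule an initial $\mu_p$ with $p>1/2$ produces positive particles of density $(1-p)^2$ and negative particles of density $p^2$, the positive ones are entirely annihilated, and a density $2p-1$ of ``eternal'' negative particles survives, separating the line into free-flow blocks $(10)^\ell$ and jams. The whole burden is then to show that the extra mechanism --- Kari's corrections $0010\mapsto 0000$ and $1011\mapsto 1111$, or the occasional majority step of the PCA --- erodes the free-flow (minority) phase and drives the configuration to $\one$, rather than leaving it in the density-preserving mixture produced by traffic alone.

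For the majority-traffic PCA I would first record that both $\maj$ and $\traf$ are monotone, so the PCA is attractive; hence $\mu_0 F^t$ and $\mu_1 F^t$ converge monotonically to extremal invariant measures and it suffices to control the trajectory started from $\mu_p$ between them. I would then treat $\alpha$ as a small perturbation of pure traffic and try to run a contour/weight argument in the spirit of Toom's eroder analysis: each majority step removes an isolated minority cell with probability of order $\alpha$, biasing the eternal-particle dynamics so that the free-flow blocks shrink on average. The threshold $\alpha_c$ in the statement reflects the expectation that large $\alpha$ reintroduces the symmetric-majority pathologies (frozen alternating or blocked patterns), so the argument should succeed only below some critical value. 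For GKL I would instead use the domain-wall representation in which boundaries between maximal $0$- and $1$-domains move ballistically, combined with its eroder property~\cite{gonzaga}: the aim is to show that for $p>1/2$ the walls bounding $0$-domains collide and annihilate faster than they are created, so that, exactly as in the Toom proof, one decomposes $S_\zero$ into clusters, shows each erodes in almost surely finite time, and invokes Borel--Cantelli using the exponential decay of initial cluster sizes to conclude that any fixed cell stabilises to $1$.

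The decisive obstacle is precisely the step where the $\Z^2$ argument is easy and the $\Z$ argument is not. On the triangular lattice, $0$-clusters neither merge nor leave their enveloping rectangles, so only finitely many of them ever influence the origin and the local argument closes. In one dimension the defects travel, can recombine, are not confined by bounding intervals, and arbitrarily distant initial fluctuations can a priori feed new minority structure into a neighbourhood of $0$. Ruling out such persistent, self-sustaining mixed (metastable) configurations is exactly the kind of stability question that makes one-dimensional reliable computation delicate, and that is formalised by the positive rates conjecture~\cite{gacs_pos}. This is why the statement remains a conjecture, and why even the perturbative small-$\alpha$ regime is expected to require genuinely new one-dimensional input rather than a direct transcription of the lattice or tree proofs.
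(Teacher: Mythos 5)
The statement you were asked to prove is, in the paper itself, only a \emph{conjecture}: the authors offer no proof, and support it instead with space-time diagrams, numerical scaling of the classification quality $Q(n)$, the eroder properties known from the literature, and one genuine analytical result --- the proposition that the extremal invariant measures of the majority-traffic PCA and of Kari's CA are exactly $\Dzero$, $\Done$ and $(\delta_{(01)^{\Z}}+\delta_{(10)^{\Z}})/2$, whereas GKL carries a continuum of extremal invariant measures of densities in $[1/3,2/3]$. Your proposal is likewise a research sketch rather than a proof, and you say so explicitly; your identification of the obstruction (defects in one dimension travel, merge, and are not confined, so no analogue of the enveloping-rectangle argument closes, and the difficulty is of the same nature as the positive rates problem) matches the paper's own discussion in its concluding subsection. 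In that sense you and the paper end in the same place. Note, though, that your sketch misses the one piece of rigorous evidence the paper does provide: the invariant-measure computation, which is what singles out majority-traffic and Kari as better candidates than GKL.

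There is also a concrete false step inside your sketch. You claim that ``both $\maj$ and $\traf$ are monotone, so the PCA is attractive.'' The traffic rule (ECA 184) is \emph{not} monotone: $(1,0,0)\leq(1,1,0)$ coordinatewise, yet $\traf(1,0,0)=1>0=\traf(1,1,0)$. Hence the majority-traffic PCA is not attractive in the standard stochastic-order sense, and the proposed first step --- sandwiching $\mu_p F^t$ between monotone limits from $\delta_{\zero}$ and $\delta_{\one}$ --- fails at the outset. Any perturbative ``small $\alpha$'' contour argument would have to be built without this crutch, which is one more reason the one-dimensional problem resists the transcription of Toom-type techniques and remains open.
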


\subsection{Invariant Measures}

Following ideas developed by Kurka \cite{kurka}, we can give a precise description of the invariant measures
of these PCA. 

\begin{proposition}
For the majority-traffic PCA and for Kari traffic CA, the extremal
invariant measures are $\Dzero, \Done,$ and
$(\delta_{(01)^{\Z}}+\delta_{(10)^{\Z}})/2$. For GKL, on top of these
three measures, there exist extremal
invariant measures of density $p$ for any $p\in [1/3,2/3]$. 
\end{proposition}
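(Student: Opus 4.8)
The plan is to analyse all three models through the particle (domain-wall) representation, following Kurka, thereby reducing the classification of invariant measures to that of invariant configurations of a particle system. For the two traffic-based rules I would reuse the recoding $\psi(x)_i = 1 - x_i - x_{i-1}$ introduced above, under which the traffic CA becomes the Ballistic Annihilation model: a cell carries a positive particle when $x_{i-1}x_i = 00$, a negative particle when $x_{i-1}x_i = 11$, and no particle otherwise; positive particles drift right, negative particles drift left, and a positive and a negative particle annihilate on contact. The three canonical measures are exactly the three rigid regimes of this system: $\Dzero$ and $\Done$ correspond to the two particle-saturated uniform configurations, and $(\delta_{(01)^{\Z}}+\delta_{(10)^{\Z}})/2$ to the particle-free configurations. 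First I would verify that these three are indeed invariant — for the checkerboard measure this uses that $\maj$, $\traf$ and Kari's correction all agree on the alternating patterns, so that each of $(01)^{\Z}$ and $(10)^{\Z}$ is mapped deterministically onto the other — and that they are extremal and pairwise distinct.

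The heart of the argument is the converse, that nothing else survives. Pure traffic has a whole continuum of invariant measures — the single-species particle gases at every density (configurations with no two consecutive $1$'s, or no two consecutive $0$'s), which simply drift rigidly — and the point of the majority mixing (for the PCA) and of Kari's correction step is precisely to destroy these. Both extra mechanisms erode finite clusters of the minority symbol: an isolated car sits in a pattern $\ldots 0010 \ldots$ and an isolated hole in $\ldots 1011 \ldots$, and one has $\maj(0,1,0)=0$, $\maj(1,0,1)=1$, while Kari's correction is defined to flip exactly these two patterns. I would therefore look for a monotone functional of the measure — for instance one controlling the density of finite minority clusters / free defects — which the combined dynamics can leave stationary only when the measure is concentrated on configurations free of such defects, that is, on $\zero$, on $\one$, or on the alternating pair $\{(01)^{\Z},(10)^{\Z}\}$. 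Stationarity under an invariant measure then forces its support into these three rigid regimes, and an ergodic decomposition identifies the extremal invariant measures with the three listed. For the PCA this has to be carried out probabilistically, using that the per-cell majority update fires independently with probability $\alpha>0$.

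For GKL the recoding to ballistic annihilation is not available, so I would work directly with GKL's own fixed points and defects. I would first exhibit the additional invariant measures: the spatially periodic configurations $(001)^{\Z}$ and $(011)^{\Z}$ are fixed points of density $1/3$ and $2/3$ (a direct check of the two majority branches of $\gkl$), and more generally the set $\Sigma$ of GKL-fixed configurations built from these stable blocks is a shift-invariant subshift on which $\gkl$ acts as the identity; every shift-ergodic measure on $\Sigma$ is therefore a $\gkl$-invariant measure, it is extremal, and the densities realised fill the whole interval $[1/3,2/3]$. The converse — that every extremal invariant measure is one of these together with the three canonical ones, and in particular that no invariant measure can have density outside $[1/3,2/3]$ — is what requires the most work. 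Here I would invoke the eroder property of GKL (finite islands of the minority symbol are erased) to rule out densities outside $[1/3,2/3]$ and to force the support of any invariant measure onto $\Sigma$ and the period-two checkerboard orbit, then conclude by ergodic decomposition.

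The step I expect to be hardest, in all three models, is this completeness direction: showing that the listed measures exhaust the invariant ones. For the traffic-based rules the delicate point is to turn the heuristic ``isolated defects get eroded'' into a rigorous monotonicity statement valid for an \emph{arbitrary} (not necessarily translation-invariant) invariant measure and, for the PCA, to control the extra randomness. For GKL the delicate point is the precise dynamical analysis of the domain walls needed to pin down the support of an invariant measure and to obtain the sharp density band $[1/3,2/3]$.
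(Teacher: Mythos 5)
There is a genuine gap, and it sits exactly at the point you yourself flag as hardest: the completeness direction for the majority-traffic PCA and for Kari's CA. Your plan verifies that the three measures are invariant (the easy half) and correctly identifies the right spirit (Kurka-style particles/defects, erosion of minority patterns, a monotone quantity), but where the proof has to happen you write that you ``would look for a monotone functional \ldots which the combined dynamics can leave stationary only when the measure is concentrated on configurations free of such defects.'' That sentence restates the goal; it is not an argument, and nothing in the proposal indicates what the functional is or why stationarity kills the defects. The paper's proof consists precisely of producing such quantities explicitly. For the majority-traffic PCA $P$ of parameter $\alpha$, an exhaustive enumeration of the preimage cylinders of $[100]$ shows, for shift-invariant $\mu$, that $\mu P[100]=\mu[100]-(1-\alpha)\mu[10011]-\alpha\mu[00100]$; hence $(\mu P^n[100])_n$ is non-increasing and invariance forces $\mu[10011]=\mu[00100]=0$. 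These zeros are then propagated by reachability arguments (e.g.\ from $[10^n0011]$ one reaches $[10011]$ with positive probability, and from $[00(10)^n11]$ one reaches $[0011]$) to give $\mu[0011]=0$ and $\mu[00(10)^n11]=0$, which splits $\mu$ into components supported on the subshifts with no occurrence of $00$ and no occurrence of $11$; a second identity, $\mu P[11]=\mu[11]+\alpha\mu[110]+\alpha\mu[1101]$, then forces $\mu[110]=0$ and pins the support to $\zero$, $\one$ and the two checkerboards. For Kari's CA the key lemma is a backward-in-time rigidity of the patterns $100$ and $110$ (a $100$ at time $1$ must come from a $100$ one cell to the left at time $0$, a $110$ from a $110$ one cell to the right), which yields $\mu[110x100]=0$ for every word $x$, followed by the same splitting and an erosion argument inside the no-$00$ subshift. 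Without some concrete substitute for these computations, your proposal does not prove the exhaustion claim, which is the actual content of the proposition for these two rules.

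A second, smaller problem is a misreading of the GKL part of the statement. The proposition only asserts the \emph{existence} of additional extremal invariant measures of every density in $[1/3,2/3]$; the paper proves this by checking that every bi-infinite concatenation of the blocks $001$ and $011$ is a fixed point of GKL and taking shift-ergodic measures on that subshift (your proposal does this correctly). The ``converse'' you announce for GKL --- that these measures together with the three canonical ones exhaust all extremal invariant measures, and that no invariant measure has density outside $[1/3,2/3]$ --- is not claimed by the proposition, is not proved in the paper, and is not needed; planning to establish it (and labelling it the hardest step of the GKL argument) commits you to work that the statement does not require and that is quite possibly out of reach.
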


\begin{proof}

{\bf Majority-traffic PCA.}
Let us consider the majority-traffic PCA $P$ of parameter $\a\in (0,1)$. We denote by $[x_0,\ldots,x_n]_k$ the \emph{cylinder set} of all configurations $y\in\A^{\Z}$ satisfying $y_{k+i}=x_i$ for $0\leq i\leq n$. Let $\mu$ be any shift-invariant measure. An exhaustive search shows that if at time $1$, we observe the cylinder $[100]_0$ then there are only eight possible cylinders of size $5$ at time $0$, that are: 
\begin{align*}
[01100]_{-1}, 
[10000]_{-1}, 
[10001]_{-1}, 
[10010]_{-1},\\
[10100]_{-1}, 
[11000]_{-1}, 
[11001]_{-1}, 
[11100]_{-1}.
\end{align*}
If we weight each cylinder by the probability to reach $[100]_0$ from them, we obtain the following expression:
\begin{align*}
\mu P[100]=&\;
\a(1-\a)\mu[01100]+
(1-\a)\mu[10000]+ 
(1-\a)\mu[10001]+ 
(1-\a)\mu[10010] \\
&\;+\a\mu[10100]+ 
\a^2\mu[11000]+
\a^2\mu[11001]+
\a(1-\a)\mu[11100].
\end{align*}
Since the measure $\mu$ is supposed to be shift-invariant, we do not need to specify the position of the cylinders: we denote by $\mu[x_0,\ldots,x_n]$ the value $\mu([x_0,\ldots,x_n]_k)$ which does not depend on $k\in\Z$. Gathering the terms with the same coefficient, we have:
\begin{align*}
\mu P[100]=&\;(1-\a)(\mu[100]-\mu[10011])+\a\mu[10100]+\a(1-\a)\mu[1100]+\a^2\mu[1100]\\
=&\;(1-\a)(\mu[100]-\mu[10011])+\a\mu[10100]+\a\mu[1100].
\end{align*}
Some more rearrangements provide:
\begin{align*}
\mu P[100]=&\;(1-\a)(\mu[100]-\mu[10011])+\a(\mu[100]-\mu[00100])\\
=&\;\mu[100]-(1-\a)\mu[10011]-\a\mu[00100].
\end{align*}
This proves that the sequence $(\mu P^n[100])_{n\geq 0}$ is non-increasing. Let us assume that $\mu P=\mu$. Then, $\mu[10011]=\mu[00100]=0$.

Let us consider the cylinder $[10^n0011]$ for some $n\geq 2$. If we
apply the majority rule on each cell except on the second cell from
the left, then after $n$ iterations, we reach the cylinder
$[10011]$. Since this occurs with a positive probability, we obtain
that for any $n\geq 0, \mu[10^n0011]=0$. This provides:
$\mu[0011]=\mu[00011]=\mu[000011]=\ldots=\mu[0^n11]$ for any $n\geq
2$. Consequently, $\mu[0011]=0$. From a cylinder of the form
$[00(10)^n11]$, if we choose to apply the majority rule on each cell,
then we reach the cylinder $[0011]$ in $n$ steps. Thus,
$\mu[00(10)^n11]=0$ for any $n\geq 0$. It follows that $\mu$ can be
written as the sum $\mu=\mu_0+\mu_1$ of two invariant measures, where
$\mu_0$ charges only the subshift $\Sigma_0=\{x\in\A^{\Z}\mid\forall
k\in\Z, x_kx_{k+1}\not=00\}$ and $\mu_1$ the subshift
$\Sigma_1=\{x\in\A^{\Z}\mid\forall k\in\Z, x_kx_{k+1}\not=11\}$. Let
us assume that $\mu[00]=0$ (which is the case for $\mu_0$). In the
same way that we have computed $\mu P[110]$, we can compute $\mu
P[11]$, and we obtain: 
\begin{eqnarray*}
\mu P[11]&=&\a\mu[0110]+\a\mu[1110]+\a\mu[1101]+\mu[1011]+\mu[0111]+\mu[1111]\\
&=&\a\mu[110]+\a\mu[1101]+\mu[11]-\mu[0011]\\
&=&\mu[11]+\a\mu[110]+\a\mu[1101].
\end{eqnarray*}

By hypothesis, $\mu P=\mu$, so that the last equality implies that $\mu[110]=0$. 

In all cases, if $\mu$ is a shift-invariant measure such that $\mu
P=\mu$, then $\mu[00]=\mu(\zero), \mu[11]=\mu(\one)$ and
$\mu[01]=\mu[10]=\mu((01)^{\Z})=\mu((10)^{\Z})$. 

\smallskip

{\bf Kari traffic CA.}
If at time $1$, we observe the pattern $100$ at position $0$, then, at
time $0$, that is to say before the application of Kari's 
CA, this same pattern was present at position $-1$. Indeed, one can
check that none of the cell of the pattern $100$ can have been
obtained by the transformation (b) (see the definition of Kari traffic
CA), so that one has just to consider the possible history of $100$ by
the traffic CA. In the same way, one can prove that if at time $1$, we
observe the pattern $110$ at position $0$, then, at time
$0$, this same pattern was present at position $1$. Let $\mu$ be a
shift-invariant measure such that $\mu K=\mu$, where $K$ denotes Kari
traffic CA. A consequence of the result on the patterns $100$ and
$110$ that we have just described is that $\mu K^{n+1}[110x100]=0$ for
any $n\geq 0$ and any $x\in\A^n$. But since $\mu K^{n+1}=\mu$, we
obtain $\mu[110x100]=0$ for any word $x$ on the alphabet $\A$. Once
again, we can write $\mu=\mu_0+\mu_1$ where $\mu_0$ and $\mu_1$ are
two invariant measures defined on $\Sigma_0$ and $\Sigma_1$. 

Let us consider a configuration of $\Sigma_0$, that is, without the
pattern $00$. By the traffic rule, each $0$ of the configuration will
move one cell to the left. Then by rule $1$, if a $0$ is at distance
greater than $2$ from the next $0$ on its right, it is erased by rule
(b). The result follows. 

\smallskip

{\bf GKL.} 
Any word $x\in \A^{\Z}$ that is a
 concatenation of the patterns  $u=001$ and $v=011$ 
 is a fixed point of the GKL cellular automaton: if $x_n=0$, then
 either $x_{n-1}=0$ or $x_{n-3}=0$ so that $F(x)_n=0$ and if $x_n=1$,
 then either $x_{n+1}=1$ or $x_{n+3}=1$ so that $F(x)_n=1$. As a
 consequence, GKL has extremal invariant measures of
density $p$ for any $p\in[1/3,2/3]$.
\end{proof}

%
To summarize, the majority-traffic and Kari traffic CA have a simpler set of
invariant measures. It does not rule out GKL as a candidate for
solving the density classification task, but rather indicates that it could be easier to prove the result for
majority-traffic or Kari traffic CA.


\subsection{Experimental results}

Conjecture \ref{co-classify} was first motivated by the observation of the space-time diagrams, see Fig. \ref{fig:Kari} and \ref{fig:TraMaj}. We provide some numerical results that support this conjecture.
For a given ring size $ n $, we generate an initial configuration $ x
$ by assigning to each cell the state 1 with a probability $ p $ and
the state 0 with probability $ 1- p $. Let us denote by $d(x)$ the actual density of $1$ in the configuration $x$.
We let the system evolve until
it reaches a fixed point $ \zero $ or $ \one$ and see if the fixed
point is $\zero$ for $ d(x) < 1/2 $ and $\one $ for $ d(x) > 1/2$.
The quality $ Q(n) $ corresponds to the proportion
of good classifications on a given ring of size~$ n$. 

\FigScalingLaws

Figure~\ref{fig:scalingQuality} shows the evolution of $ Q(n) $, each
value of $Q(n)$ being evaluated over 100 000 samples. For the three rules, the plots are in agreement with the hypothesis that 
the asymptotic value of $ Q(n) $ is 1. 
{From} a qualitative point of view, we observe that for all values of  $d$ the quality decreases before increasing, but this is only a border phenomenon for very small ring sizes. 
We also observe that when the initial density $d$ increases from
$0.45$ to $0.48$, the value of $ n $ needed to attain a given quality
$ Q(n) $ increases dramatically. For $ d = 0.49$, the change of
derivative of the curve $ Q(n) $ becomes hardly visible. However, our belief is that $ Q(n) $ will approach one as the lattice size grows, no matter how close $ p $ is to the critical density $ 1/2$. To see why this holds, consider the error rate $ err(d) $ obtained as the probability to make a {\em bad} classification when the initial configuration is equal to $ d $.
We experimentally observed that, as $ n $ grows, the function $ err(d) $, which is defined for values $ k/n $ with $ k \in \{0,\ldots,n \} $, approaches a Bell curve whose mean is centred on $ 1/2$ and whose tail progressively approaches the 0-axis (see Fig.~\ref{fig:distrib}). At the same time, for a fixed $ p $, the probability $ p(k/n) $ that the initial configuration has a density of ones equal to $ k/n $ follows a binomial distribution of parameter $ p $. We can thus calculate the global error rate $ E(n) = 1 - Q(n) $ with 
$ E(n) = \sum_{k=0}^n err(k/n) \cdot p(k/n)
$.
Intuitively, it can be seen that as $ n $ grows to infinity, the two distributions  $err(k/n) $ and $p(k/n)$ progressively separate as their mean value is different and their variance approaches 0. As a consequence, for larger values of $ n $,  the value $ E(n) $ progressively vanishes and the quality approaches 1.

\FigDistrib

By contrast, there are other PCA, such as the rule which was
originally studied by Fuk\'s~\cite{fuks},  and which consists in doing
a copy of the right or left neighbour with a fixed probability
$p<1/2$, and the identity otherwise. This rules preserves the density in average at each time step for finite rings  (see details in \cite{fates}). As a consequence, we have $ Q(n) = \max\, ( p, 1-p ) $ which implies that the increase of $ n $ does {\em not} improve the average performance of the system. 
In the infinite case, the preservation of the density is exact and can not allow the system to classify the density. We experimentally observed the same qualitative behaviour for the rule used by Sch\"ule~\cite{schuele} or for the Majority-Traffic rule for $ \alpha > 1/2 $. We believe that there exists a strong relationship between the asymptotic behaviour of the rule on finite rings and the ability to classify the density on $ \Z$.

\subsection{Link with the positive rates ``conjecture''}\label{se-conclu}

The difficulty of classifying the density on $\Z$ is
related to the difficulty of the ergodicity problem on $\Z$. 
By definition, a PCA or an IPS has {\em
  positive rates} if all its local probability transitions are different
from 0 and 1. In $\Z^2$, there exist positive rates PCA and IPS
that are non-ergodic (for instance, a ``positive rates version'' of
Toom's rule~\cite{toom}). 
It had been a long standing
conjecture that all positive rates PCA and IPS on $\Z$ are
ergodic. G\'{a}cs disproved the conjecture by exhibiting a
complex counter-example with several invariant measures, but with an
alphabet of cardinality
$2^{18}$ instead of~2~\cite{gacs_pos}. If we knew a process that
classifies the density on $\Z$, it could pave the way to exhibit simple examples
of positive rates processes that are non-ergodic.


\end{document}